\newcommand{\field}[1]{\mathbb{{#1}}}
\newcommand{\ideal}[1]{\mathfrak{{#1}}}
\newcommand{\eps}{\varepsilon}
\newcommand{\C}{\field{C}}
\renewcommand{\L}{\field{{L}}}
\newcommand{\MC}{\mathcal{M}_C}
\newcommand{\Q}{\field{{Q}}}
\newcommand{\K}{\field{{K}}}
\newcommand{\E}{\field{{E}}}
\newcommand{\disc}{\Delta}
\newcommand{\DK}{{\disc_\K}}
\newcommand{\DL}{{\disc_\L}}
\newcommand{\DLK}{{\disc_{\L/\K}}}
\newcommand{\lDL}{{\log\DL}}
\newcommand{\llDL}{{\log\lDL}}
\newcommand{\nE}{{n_\E}}
\newcommand{\nK}{{n_\K}}
\newcommand{\nL}{{n_\L}}
\newcommand{\OO}{\mathcal{O}}
\newcommand{\OK}{{\OO_\K}}
\newcommand{\p}{{\ideal{p}}}
\renewcommand{\P}{{\ideal{P}}}
\newcommand{\Sg}{{\mathbf{S}}}
\newcommand{\iI}{{\ideal{I}}}
\newcommand{\Artin}[2]{{\genfrac[]{}{}{{#1}}{{#2}}}}
\newcommand{\intpart}[1]{\left\lfloor#1\right\rfloor}
\newcommand{\dd}{\,\mathrm{d}}
\newcommand{\Norm}{\textrm{\upshape N}}
\DeclareMathOperator{\Gal}{Gal}
\numberwithin{equation}{section}
\newtheorem*{theorem*}{Theorem}
\newtheorem{lemma}{Lemma}[section]
\newtheorem*{lemma*}{Lemma}
\theoremstyle{remark}
\newtheorem*{remark*}{Remark}
\newtheorem*{acknowledgements}{Acknowledgements}
\begin{document}
\title[Conditional upper bound for the $k$-th prime ideal with given Artin symbol]
      {Conditional upper bound for the $k$-th prime ideal with given Artin symbol}

\author[L.~Greni\'{e}]{Lo\"{\i}c Greni\'{e}}
\address[L.~Greni\'{e}]{Dipartimento di Ingegneria Gestionale, dell'Informazione e della Produzione\\
         Universit\`{a} di Bergamo\\
         viale Marconi 5\\
         24044 Dalmine
         Italy}
\email{loic.grenie@gmail.com}

\author[G.~Molteni]{Giuseppe Molteni}
\address[G.~Molteni]{Dipartimento di Matematica\\
         Universit\`{a} di Milano\\
         via Saldini 50\\
         I-20133 Milano\\
         Italy}
\email{giuseppe.molteni1@unimi.it}

\keywords{}
\subjclass[2010]{Primary 11R42, Secondary 11Y70}

\date{\today.
}

\begin{abstract}
We prove an explicit upper bound for the $k$-th prime ideal with fixed Artin symbol, under the assumption
of the validity of the Riemann hypothesis for the Dedekind zeta functions.
\end{abstract}

\maketitle

\section{Introduction}\label{sec:1}
We recall some definitions, just to fix the notations. Let $\K$ be a number field, let $\nK$ denote its
dimension, $\DK$ the absolute value of its discriminant, and $r_1(\K)$, $r_2(\K)$ the number of its real
and complex places, respectively. The von Mangoldt function $\Lambda_\K$ is defined on the set of ideals
of $\OK$ as $\Lambda_\K(\iI) := \log\Norm\p$ if $\iI=\p^m$ for some $\p$ and $m\geq 1$, and is zero
otherwise, where $\p$ denotes any nonzero prime ideal and $\Norm\p$ its absolute norm.\\
Moreover, let $\K\subseteq \L$ be a Galois extension of number fields with relative discriminant $\DLK$.
For $\P$ a prime ideal of $\L$ above a non-ramified $\p$ of $\K$, the Artin symbol $\Artin{\L/\K}{\P}$
denotes the Frobenius automorphism corresponding to $\P/\p$, and $\Artin{\L/\K}{\p}$ the conjugacy class
of all the $\Artin{\L/\K}{\P}$. The symbol $\Artin{\L/\K}{.}$ is then extended multiplicatively to the
group of fractional ideals of $\K$ coprime to $\DLK$.\\
Finally, let $C$ be any conjugacy class in $G:=\Gal(\L/\K)$ and let $\eps_C$ be its characteristic
function. Then the function $\pi_C$ and the Chebyshev function $\psi_C$ are defined as
\begin{align*}
\pi_C(x)
&:= \sharp \Big\{\p\colon \p\text{ non-ramified in }\L/\K,\Norm\p \leq x,\Artin{\L/\K}{\p}=C\Big\}       \\
&\phantom{:}= \sum_{\substack{\p\\\p\text{ non-ram.}\\\Norm\p\leq x}}\eps_C\Big(\Artin{\L/\K}{\p}\Big),  \\
\psi_C(x)
&:= \sum_{\substack{\iI\subset\OK\\\iI\text{ non-ram.}\\\Norm\iI\leq x}}\eps_C\Big(\Artin{\L/\K}{\iI}\Big)\Lambda_\K(\iI).
\end{align*}
In~\cite{GrenieMolteni9} we have proved the following explicit bound.
\begin{theorem*}
Assume GRH holds. Let $x\geq 1$, then
\begin{equation}\label{eq:1.0A}
\Big|\frac{|G|}{|C|}\psi_C(x)-x\Big|
\leq \sqrt{x}\Big[\Big(\frac{\log   x}{2\pi}+2\Big)\lDL
                + \Big(\frac{\log^2 x}{8\pi}+2\Big)\nL
             \Big].
\end{equation}
\end{theorem*}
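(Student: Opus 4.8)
The plan is to reduce \eqref{eq:1.0A} to the classical explicit formula applied to the $L$-functions attached to $\L/\K$, and then to collapse the resulting sums over characters by means of the conductor--discriminant formula.

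First I would decompose the characteristic function $\eps_C$ along the irreducible characters of $G$: fixing $g_C\in C$, orthogonality gives $\eps_C(g)=\frac{|C|}{|G|}\sum_\chi\overline{\chi(g_C)}\chi(g)$, the sum running over the irreducible characters of $G$. Substituting this into the definition of $\psi_C$ and exchanging the order of summation,
\[
\frac{|G|}{|C|}\psi_C(x)=\sum_\chi\overline{\chi(g_C)}\,\psi_\chi(x),
\qquad
\psi_\chi(x):=\sum_{\substack{\iI\subset\OK\\ \iI\text{ non-ram.}\\ \Norm\iI\leq x}}\chi\big(\Artin{\L/\K}{\iI}\big)\Lambda_\K(\iI),
\]
where $\psi_\chi$ is the Chebyshev function of the Artin $L$-function $L(s,\chi,\L/\K)$. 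Since $L(s,\chi_0,\L/\K)=\zeta_\K(s)$, it is the trivial character that produces the main term; separating it,
\[
\frac{|G|}{|C|}\psi_C(x)-x=\big(\psi_\K(x)-x\big)+\sum_{\chi\neq\chi_0}\overline{\chi(g_C)}\,\psi_\chi(x).
\]

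Next I would apply, under GRH, a truncated explicit formula to each $\psi_\chi$: up to archimedean and bounded contributions, $\psi_\chi(x)-\delta_{\chi,\chi_0}\,x$ is $-\sum_\rho x^\rho/\rho$, the sum running over the non-trivial zeros $\rho$ of $L(s,\chi,\L/\K)$, all of which lie on $\Re s=\tfrac12$ so that $|x^\rho|=\sqrt{x}$. To make this rigorous one uses Brauer induction to express $L(s,\chi,\L/\K)$ as a ratio of products of Hecke $L$-functions, whose meromorphic continuation and functional equation are unconditional and for which GRH is precisely the present hypothesis; equivalently one may work throughout with the factorisation $\zeta_\L(s)=\prod_\chi L(s,\chi,\L/\K)^{\chi(1)}$. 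The size of $\sum_\rho x^\rho/\rho$ is then governed by the number of zeros of $L(s,\chi,\L/\K)$ of bounded height, hence by $\log\mathfrak{q}(\chi)$, the absolute analytic conductor, and by $\chi(1)\,\nK$, the degree of the gamma factor of $L(s,\chi,\L/\K)$.

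Finally I would sum over $\chi$, bounding $|\chi(g_C)|\leq\chi(1)$. The two structural identities
\[
\prod_\chi\mathfrak{q}(\chi)^{\chi(1)}=\DL\qquad(\text{conductor--discriminant}),
\qquad
\sum_\chi\chi(1)^2=|G|
\]
make the weighted sums $\sum_\chi\chi(1)\log\mathfrak{q}(\chi)$ and $\sum_\chi\chi(1)\cdot\chi(1)\,\nK$ collapse to $\lDL$ and $|G|\,\nK=\nL$ respectively; retaining the trivial character changes nothing since $\DK\le\DL$ and $\nK\le\nL$. Combining this with a clean explicit estimate for $|\psi_\K(x)-x|$ and optimising the archimedean contributions yields the coefficients $\tfrac{1}{2\pi}$ and $\tfrac{1}{8\pi}$ together with the additive constants. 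I expect the real work to lie in this last quantitative step: producing a sufficiently sharp form of the explicit formula --- with explicit constants and a correct treatment of the jump of $\psi$ at prime powers of norm exactly $x$ --- so that after summation nothing worse than the stated $\sqrt{x}\big[(\tfrac{\log x}{2\pi}+2)\lDL+(\tfrac{\log^2 x}{8\pi}+2)\nL\big]$ remains. The representation-theoretic and analytic-continuation points, by contrast, are handled routinely by the reduction above.
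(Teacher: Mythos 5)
Your proposal takes a genuinely different route from the paper's, and the differences matter at both the structural and the quantitative level. Structurally, you decompose $\eps_C$ over the (possibly non-abelian) irreducible characters of the full group $G=\Gal(\L/\K)$ and then invoke Brauer induction to express each $L(s,\chi,\L/\K)$ as a ratio of Hecke $L$-functions. The paper instead exploits the Deuring--MacCluer reduction recalled in~\eqref{eq:2.2A}: it fixes $g\in C$, sets $H=\langle g\rangle$ and $\E=\L^H$, and rewrites everything in terms of the abelian characters of the \emph{cyclic} extension $\L/\E$, which are genuine Hecke characters by class field theory. This costs a larger base field but buys honest $L$-functions, free of the bookkeeping of Brauer's virtual characters (sign ambiguities, cancelling zeros and poles on the critical line, the subtle behaviour of $\sum_\chi\overline{\chi(g_C)}$ against a ratio rather than a product). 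Your fallback remark that one ``may work throughout with $\zeta_\L=\prod_\chi L(s,\chi)^{\chi(1)}$'' is a correct positivity device for upper bounds, but it is not an explicit formula for $\psi_C$; turning it into one is precisely what the cyclic reduction accomplishes cleanly.

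The more serious gap is quantitative, and you in fact flag it yourself. The coefficients $\tfrac{1}{2\pi}$ and $\tfrac{1}{8\pi}$ in~\eqref{eq:1.0A} do not fall out of a direct truncated explicit formula for $\psi_C$ with the raw kernel $x^s/s$: that route gives $|\sum_{|\gamma|\le T}x^\rho/\rho|\lesssim\sqrt{x}\,\frac{\log^2 T}{2\pi}$, and with the standard choice $T\asymp x$ one loses a factor of roughly $4$ against $\tfrac{1}{8\pi}$; recovering the sharp constant requires a delicate optimisation of $T$ together with fully explicit truncation remainders, which is not in your plan. The paper's route avoids this entirely: it works with the smoothed quantity $\psi^{(1)}(C;x)$ and the absolutely convergent kernel $\frac{x^{s+1}}{s(s+1)}$ (see~\eqref{eq:2.1A} and~\eqref{eq:2.5A}), so the sum over zeros is $\sum_\rho\frac{1}{|\rho(\rho+1)|}$, a convergent series bounded sharply in terms of $\lDL$ and $\nL$ (cf.\ Lemma~\ref{lem:3.5A}); the unsmoothed bound~\eqref{eq:1.0A} is then extracted from the smoothed one, picking up the $\log x$ and $\log^2 x$ factors in the process, as the paper itself notes (``we have obtained~\eqref{eq:1.0A} as a by-product of computations for $\psi_C^{(1)}$''). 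So while your high-level outline --- character decomposition, GRH, conductor--discriminant --- points in the right direction, the step you defer to ``the real work'' is exactly where the stated constants are earned, and your plan does not yet show how your approach would produce them.
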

\noindent
This result concludes a quite long set of similar but partial computations, originated with Jeffrey
Lagarias and Andrew Odlyzko's paper~\cite{LagariasOdlyzko} where this result is proved with undetermined
constants, and which was followed by the result announced by Joseph Oesterl\'{e}~\cite{Oesterle} and the one
of Bruno Winckler~\cite[Th.~8.1]{Winckler} (both with the same generality and explicit but larger
constants), the one of Lowell Schoenfeld~\cite{Schoenfeld1} (same bound but only for the case
$\L=\K=\Q$), and our recent paper~\cite{GrenieMolteni3} (same conclusion, but only for the case $\L=\K$).
\medskip

Bound~\eqref{eq:1.0A} implies that for every class $C$ there is a prime ideal $\p$ with
\begin{equation}\label{eq:1.1A}
\Norm\p \leq \Big(\Big(\frac{1}{2\pi\log\delta}+o(1)\Big)\lDL(\llDL)^2\Big)^2
\end{equation}
which is not ramified and for which $\Artin{\L/\K}{\p}=C$, where $\delta$ is any lower bound for the root
discriminant of the family of fields for which we are interested to apply the result: $\sqrt{3}$ is a
possible value for all fields, and $\sqrt[3]{23}$ is another possible value when the
six quadratic fields $\Q[\sqrt{d}]$ with $d\in\{-7,-3,-2,-1,2,5\}$ are excluded.\\
This consequence of any bound similar to~\eqref{eq:1.0A} is already discussed in Lagarias and Odlyzko's
paper, where in fact the existence of a bound of the form $c(\lDL(\llDL)^2)^2$ for some computable (but
not explicit) constant $c$ is proved.

The appearance of the factor $(\llDL)^4$ in~\eqref{eq:1.1A} is a consequence of the use
of~\eqref{eq:1.0A}, which actually is not designed for that purpose. In fact, in essence, this comes down
to the fact that the kernel $\frac{x^s}{s}$, needed to relate $\psi_C$ to a convenient sum of logarithmic
derivatives of Artin $L$-functions, does not decay very quickly along the vertical lines.
To overcome this problem, the authors of~\cite{LagariasOdlyzko} also sketched a different approach
replacing $\frac{x^s}{s}$ with the kernel $\big(\frac{y^{s-1}-x^{s-1}}{s-1}\big)^2$. With a suitable
choice of the parameters $x$ and $y$ in terms of $\lDL$, this kernel allows to remove the factor
$(\llDL)^4$ from the bound.\\
This improvement is not exclusive of this specific kernel, and the same conclusion may be achieved also
via different kernels, provided that they decay quickly enough along vertical lines. In particular, we
have obtained~\eqref{eq:1.0A} as a by-product of computations for $\psi_C^{(1)}(x) := \sum_{n\leq
x}\psi_C(n)$, which is related to the kernel $\frac{x^{s+1}}{s(s+1)}$. Its decay along vertical lines is
better than the one of $\frac{x^s}{s}$ and is actually strong enough to get a bound of the type
of~\eqref{eq:1.1A}, without the factor $(\llDL)^4$.
In fact, we prove here the following claim as a consequence of some of the inner results we got
in~\cite{GrenieMolteni9}.
\begin{theorem*}
Assume GRH holds. Fix any class $C$ and any integer $k\geq 0$. Assume
\[
\sqrt{x}
\geq 1.075\lDL + \sqrt{2\tfrac{|G|}{|C|}k\log\big(\tfrac{|G|}{|C|}k\big)} + 2\tfrac{|G|}{|C|} + 15,
\]
where $k\log k$ is set to $0$ for $k=0$. Then $\pi_C(x) \geq k+1$. In other words, if we order the prime
ideals $\{\p_{k}\}_{k=1}^\infty$ which are not ramified and for which $\Artin{\L/\K}{\p}=C$ according to
their norm, for every $k\geq 0$ we have
\[
\Norm\p_{k+1} \leq \Big(1.075\lDL + \sqrt{2\tfrac{|G|}{|C|}k\log\big(\tfrac{|G|}{|C|}k\big)} + 2\tfrac{|G|}{|C|} + 15\Big)^2.
\]
\end{theorem*}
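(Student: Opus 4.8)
The plan is to start from the inner estimate of~\cite{GrenieMolteni9} for the smoothed Chebyshev function $\psi_C^{(1)}(x)=\sum_{n\le x}\psi_C(n)$. Since this function corresponds to the kernel $x^{s+1}/(s(s+1))$, whose decay on vertical lines is $|s|^{-2}$, under GRH one has an estimate of the shape $\bigl|\tfrac{|G|}{|C|}\psi_C^{(1)}(x)-M(x)\bigr|\le E_1(x)$ with $M(x)$ the main term ($\sim x^2/2$) and $E_1(x)\le x^{3/2}(a\lDL+b\nL)+(\text{lower order})$, crucially with \emph{no} powers of $\log x$ multiplying $\lDL$ or $\nL$; this no‑log feature (which is exactly what allowed~\eqref{eq:1.0A} to be deduced in~\cite{GrenieMolteni9}) is what I exploit. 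From it I produce a clean lower bound for $\psi_C(x)$, then for $\vartheta_C(x):=\sum_{\Norm\p\le x,\ \Artin{\L/\K}{\p}=C}\log\Norm\p$, then for $\pi_C(x)$, and the hypothesis on $\sqrt x$ will be exactly what forces $\pi_C(x)\ge k+1$.

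For the passage $\psi_C^{(1)}\to\psi_C$ I use that $\psi_C$ is non-decreasing and that $(x-h,x]$ contains exactly $h$ integers when $h\in\N$, so that $\psi_C(x)\ge\bigl(\psi_C^{(1)}(x)-\psi_C^{(1)}(x-h)\bigr)/h$; feeding in the two $\psi_C^{(1)}$-estimates and using $M(x)-M(x-h)\ge hx-h^2/2-h$ together with the monotonicity of $E_1$ gives $\tfrac{|G|}{|C|}\psi_C(x)\ge x-\tfrac h2-\tfrac{2E_1(x)}h-O(1)$, and the choice $h\asymp x^{3/4}\sqrt{a\lDL+b\nL}$ yields
\[
\tfrac{|G|}{|C|}\psi_C(x)\ \ge\ x-2x^{3/4}\sqrt{a\lDL+b\nL}-(\text{lower order}).
\]
(Going through this short-interval average rather than the crude $\psi_C^{(1)}(x)\le\lfloor x\rfloor\psi_C(x)$ is essential: the latter costs a spurious factor $2$ in the main term, which the final counting step cannot afford.) The prime-power defect is elementary: $\psi_C(x)-\vartheta_C(x)\le\psi_\K(x)-\vartheta_\K(x)=\sum_{i\ge2}\vartheta_\K(x^{1/i})\le\psi_\K(\sqrt x)+O\bigl((\log x)\,x^{1/3}\bigr)$, and~\eqref{eq:1.0A} applied to $\L=\K$ gives $\psi_\K(\sqrt x)\le\sqrt x+(\text{lower order})$; combined with $\pi_C(x)\ge\vartheta_C(x)/\log x$ (valid because $\log\Norm\p\le\log x$ when $\Norm\p\le x$) this produces
\[
\pi_C(x)\ \ge\ \frac1{\log x}\Bigl(\tfrac{|C|}{|G|}\bigl(x-2x^{3/4}\sqrt{a\lDL+b\nL}\bigr)-\sqrt x-(\text{lower order})\Bigr).
\]

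To eliminate $\nL$ I invoke the Minkowski bound $\DL^{1/\nL}\ge\sqrt3$ (valid for $\nL\ge2$; the degenerate case $\nL=1$, i.e.\ $\L=\K=\Q$, is classical), whence $\nL\le\tfrac2{\log3}\lDL$ and $a\lDL+b\nL\le(\text{const})\lDL$. Thus $\pi_C(x)\ge k+1$ follows once $x$ satisfies a ``master inequality'' of the form $x\ge\tfrac{|G|}{|C|}(k+1)\log x+(\text{const})\,x^{3/4}\sqrt{\lDL}+\tfrac{|G|}{|C|}\sqrt x+(\text{lower order})$. Writing $u=\sqrt x$ and bounding $u^{3/2}\sqrt{\lDL}\le\tfrac\lambda2u^2+\tfrac1{2\lambda}\lDL\,u$ by AM--GM, with $\lambda$ tuned so the coefficient of $\lDL\,u$ is $\le1.075$ while that of $u^2$ stays $<1$, brings this to $u^2\ge1.075\,\lDL\,u+2\tfrac{|G|}{|C|}u+\tfrac{|G|}{|C|}(k+1)\log x+(\text{lower order})$ (the $2\tfrac{|G|}{|C|}u$ absorbing the genuine $\tfrac{|G|}{|C|}\sqrt x$ term and the residual $\tfrac{|G|}{|C|}$-sized debris). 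It then remains to verify the purely real-variable statement: if $u\ge1.075\lDL+\sqrt{2\tfrac{|G|}{|C|}k\log(\tfrac{|G|}{|C|}k)}+2\tfrac{|G|}{|C|}+15$ then the displayed inequality holds. I would set $A=1.075\lDL+2\tfrac{|G|}{|C|}$ and $w=\sqrt{2\tfrac{|G|}{|C|}k\log(\tfrac{|G|}{|C|}k)}+15$, note that $u\mapsto u^2-Au=u(u-A)$ is increasing and $u-A\ge w$, and check the inequality at $u=A+w$: there $w^2$ contributes $2\tfrac{|G|}{|C|}k\log(\tfrac{|G|}{|C|}k)$, which is \emph{twice} what the main term $\tfrac{|G|}{|C|}(k+1)\log x\approx\tfrac{|G|}{|C|}(k+1)\log\bigl(2\tfrac{|G|}{|C|}k\log(\tfrac{|G|}{|C|}k)\bigr)$ asymptotically requires, so the surplus $\tfrac{|G|}{|C|}(k-1)\log(\tfrac{|G|}{|C|}k)$ together with $Aw$ and the extra $30\sqrt{2\tfrac{|G|}{|C|}k\log(\tfrac{|G|}{|C|}k)}+225$ inside $w^2$ covers the $\log\log$ loss and all lower-order terms; the cases $k=0$ (where $k\log k:=0$), small $k$, and large $\tfrac{|G|}{|C|}$ rely on the slack in ``$+2\tfrac{|G|}{|C|}+15$''.

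The main obstacle I anticipate is this last verification: propagating the explicit constants of~\cite{GrenieMolteni9} through the difference-quotient optimization and the AM--GM step so that the coefficient of $\lDL$ comes out exactly $1.075$ (and not larger), and then establishing the clean closed-form inequality for $u$ with the four prescribed summands despite the $\log\log$-type discrepancy between $u$ and $\sqrt{2\tfrac{|G|}{|C|}k\log(\tfrac{|G|}{|C|}k)}$. The secondary delicate point is arranging the $\psi_C^{(1)}\to\psi_C$ step so as to keep the main term sharp and the error only $x^{3/4}\sqrt{\lDL}$, since a naive passage would forfeit the whole improvement over~\eqref{eq:1.1A}.
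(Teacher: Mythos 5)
Your proposal follows the same high-level path (the smoothed function with kernel $x^{s+1}/(s(s+1))$, $\psi\to\vartheta\to\pi$, and a closed-form verification), but the central de-integration step is where the constant $1.075$ is decided, and there your route does not close.

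The gap is quantitative but fatal. After the explicit formula the error in $\tfrac{|G|}{|C|}\psi^{(1)}(C;x)$ is, in the leading term, $E_1(x)\approx 0.5375\,x^{3/2}\lDL$ (this is exactly Lemma~\ref{lem:3.5A}). Your short-interval step $\psi_C(x)\ge(\psi_C^{(1)}(x)-\psi_C^{(1)}(x-h))/h\ge x-h/2-2E_1(x)/h$, optimized at $h=2\sqrt{E_1(x)}$, gives $g_c\psi_C(x)\ge x-2\sqrt{E_1(x)}\approx x-1.466\,x^{3/4}\sqrt{\lDL}$. For this lower bound to be positive one already needs $x^{1/4}\gtrsim 1.466\sqrt{\lDL}$, i.e.\ $\sqrt x\gtrsim 4\cdot 0.5375\,\lDL=2.15\,\lDL$, twice the target. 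Your proposed AM--GM repair cannot fix this: from $1.466\,u^{3/2}\sqrt{\lDL}\le 0.733\lambda u^2+\tfrac{0.733}{\lambda}\lDL\,u$, the usable inequality is $(1-0.733\lambda)u^2\ge\tfrac{0.733}{\lambda}\lDL\,u+\cdots$, and after dividing by $(1-0.733\lambda)$ the coefficient of $\lDL\,u$ is $\tfrac{0.733}{\lambda(1-0.733\lambda)}$, which is minimized (at $\lambda\approx 0.682$) at the value $2.15$. You asked only that the coefficient of $\lDL\,u$ \emph{before} normalization be $\le1.075$ and that of $u^2$ be $<1$, but those two requests cannot simultaneously leave the normalized coefficient below $2.15$. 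So the route $\psi^{(1)}\to\psi\to\vartheta\to\pi$ intrinsically loses a factor $2$ in the coefficient of $\lDL$.

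The paper avoids the de-integration entirely. It works with $\vartheta_C^{(1)}(x)=\int_0^x\vartheta_C$ and exploits the exact identity $\vartheta_C^{(1)}(x)=\sum_{n\le x}a_C(n)(x-n)\log n$, together with the elementary calculus fact (Lemma~\ref{lem:4.1A}) that $(x-y)\log y\le x(\log x-\log(2\log x))$ for $x\ge400$. This yields $\vartheta_C^{(1)}(x)\le\pi_C(x)\cdot x(\log x-\log(2\log x))$ directly, so the lower bound $\tfrac{x^2}{2}-0.5375\,x^{3/2}\lDL-\cdots\le g_c\vartheta_C^{(1)}(x)$ is compared with the main term $x^2/2$ at the $x^{3/2}$ scale, giving the threshold $\sqrt x\gtrsim 2\cdot0.5375\,\lDL=1.075\,\lDL$ with no intermediate averaging loss. (The $\log(2\log x)$ correction is what makes the $k$-dependence come out tight as $\sqrt{2g_ck\log(g_ck)}$ rather than with a spare factor.) A secondary difference: the paper never invokes the Minkowski bound to eliminate $\nL$; it keeps $\nL$ explicit and exploits the negative $\nL$-coefficient in Lemma~\ref{lem:3.5A} to cancel most of it, which also contributes to keeping the constant down. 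If you want to rescue your outline, replace the short-interval step by the identity for $\vartheta_C^{(1)}$ plus Lemma~\ref{lem:4.1A}; otherwise the constant you will prove is $\approx2.15\lDL$, not $1.075\lDL$.
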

\noindent %
Thus, for instance, there is a non-ramified prime ideal $\p$ in $C$ with $\Norm\p \leq \bigl(1.075\lDL +
2\tfrac{|G|}{|C|} + 15\bigr)^2$ (case $k=0$) and two such ideals within $\Bigl(1.075\lDL +
\sqrt{2\tfrac{|G|}{|C|} \log\big(\tfrac{|G|}{|C|}\big)} + 2\tfrac{|G|}{|C|} + 15\Bigr)^2$ (case $k=1$).

The proof of this theorem shows that the constant $15$ can be removed when the degree of the field is
large enough, but the main constant $1.075$ is rooted in the method and can be improved only marginally.
In particular it remains larger than $1$. This implies that the case $k = 0$ of the theorem is weaker
than the analogous conclusion of the paper by Eric Bach and Jonathan
Sorenson~\cite[Th.~3.1]{BachSorenson}, further improved for the case where $\K=\Q$ and $\L/\Q$ is abelian
by Youness Lamzouri, Xiannan Li and Kannan Soundararajan~\cite[Th.~1.2]{LamzouriLiSoundararajan} (see
also~\cite{LamzouriLiSoundararajan2}).\\
The claim giving more ideals (i.e., $k\geq 1$) cannot be reached with Lagarias--Odlyzko's,
Bach--Sorenson's or Lam\-zou\-ri--Li--Soun\-da\-ra\-ra\-jan's approaches.
%
\smallskip

The case where $\K=\Q$ and $C$ is the trivial class has been considered also in~\cite[Corollary
2.1]{GrenieMolteni2}, with similar conclusions, in particular with the same constant for $\lDL$ but a
larger one for the $k\log k$ term.
%
%
\smallskip

Finally, we notice that if the field extension $\L/\K$ and the class $C$ are fixed and only the
dependence on $k$ is retained, then the theorem says that the norm of the $k$th prime ideals in $C$ is
$\leq (2 + o(1))\tfrac{|G|}{|C|}k\log k$: this is the correct upper bound in its dependency on $k$ and on
the density factor $\tfrac{|G|}{|C|}$, but we know from the prime ideals density theorems that the
absolute constant $2$ could be $1$. This overestimation represents the price we pay in order to get a
uniform and totally explicit result.

\begin{acknowledgements}
The authors are members of the INdAM group GNSAGA.
\end{acknowledgements}

\section{Preliminary facts}\label{sec:2A}
For any prime ideal $\p\subseteq \OK$, possibly ramified, let $\P$ be any prime ideal dividing
$\p\mathcal{O}_\L$, let $I$ be the inertia group of $\P$ and $\tau$ be one of the Frobenius automorphisms
corresponding to $\P/\p$. Let
\[
\theta(C;\p^m)  := \frac{1}{|I|}\sum_{a\in I}\eps_C(\tau^m a).
\]
Notice that $\theta(C;\p^m)\in[0,1]$, and that $\theta(C;\p^m)=\eps_C(\Artin{\L/\K}{\p^m})$ for
any non-ramified prime $\p$ and power $m$. Thus $\theta(C;\cdot)$ extends $\eps_C(\Artin{\L/\K}{\cdot})$
to ramifying prime ideals powers. %
With $\theta(C;\cdot)$ at our disposal we define the new function
\[
\psi(C;x)       := \sum_{\substack{\iI\subset\OK\\\Norm\iI\leq x}}\theta(C;\iI)\Lambda_\K(\iI).
\]
Observe that $\psi_C(x)$ and $\psi(C;x)$ are essentially equivalent since they agree except on
ramified-prime-powers ideals. However, $\psi(C;\cdot)$ is easier to deal with, since
$\theta(C;\cdot)$ is well defined for every prime ideal.\\
We further set
\[
\psi^{(1)}(C;x) := \int_{0}^{x}\psi(C;t)\dd t
\]
and, for $s>1$,
\[
K(C;s) := \sum_{\iI\subseteq \OK}\theta(C;\iI)\Lambda_\K(\iI)(\Norm\iI)^{-s}.
\]
As in~\cite[Ch.~IV Sec.~4, p. 73]{Ingham2} and~\cite[Sec.~5]{LagariasOdlyzko}, we have the integral
representation
\begin{equation}\label{eq:2.1A}
\psi^{(1)}(C;x) = \frac{1}{2\pi i}\int_{2-i\infty}^{2+i\infty} K(C;s)\frac{x^{s+1}}{s(s+1)}\,\dd s.
\end{equation}
The function $\theta(C;\cdot)$ is a class function and therefore can be written as a linear combination
of characters of irreducible representations of the group $G$. A clever trick (due to
Deuring~\cite{Deuring1} and MacCluer~\cite{MacCluer}, see also Lagarias and
Odlyzko~\cite[Lemma~4.1]{LagariasOdlyzko} and~\cite[p.~445--446]{GrenieMolteni9}) allows to write this
function as a linear combination of characters which are induced from characters of a certain cyclic
subgroup $H$ of $G$ specified below. Namely,
\begin{equation}\label{eq:2.2A}
K(C;s)
= -\frac{|C|}{|G|}\sum_\chi \bar\chi(g) \frac{L'}{L}(s,\chi,\L/\E),
\end{equation}
where $g$ is any fixed element in $C$, $\E:= \L^H$ is the subfield of $\L$ fixed by $H := \langle
g\rangle$, $L(s,\chi,\L/\E)$ is the Artin $L$-function associated with the extension $\L/\E$ and the
character $\chi$, and the sum runs on all irreducible characters $\chi$ of $H$. Since the extension is
abelian, this coincides with a suitable Hecke $L$-function, by class field theory.\\
With~\eqref{eq:2.1A},~\eqref{eq:2.2A} produces the identity
\begin{equation}\label{eq:2.3A}
\frac{|G|}{|C|}\psi^{(1)}(C;x)
= -\sum_\chi \bar\chi(g)\frac{1}{2\pi i}\int_{2-i\infty}^{2+i\infty} \frac{L'}{L}(s,\chi,\L/\E)\frac{x^{s+1}}{s(s+1)}\,\dd s.
\end{equation}
Finally, we introduce a special notation for the type of sum on characters as the one appearing
in~\eqref{eq:2.3A}, and for any $f\colon \widehat{\Gal(\L/\E)}\to \C$ we set
\[
\MC f := \sum_\chi \bar\chi(g)f(\chi).
\]
With this language, Equality~\eqref{eq:2.3A} reads
\begin{equation}\label{eq:2.4A}
\frac{|G|}{|C|}\psi^{(1)}(C;x)
= \MC I_\chi(x),
\end{equation}
where
\begin{equation}\label{eq:2.5A}
I_\chi(x)
:= -\frac{1}{2\pi i}\int_{2-i\infty}^{2+i\infty} \frac{L'}{L}(s,\chi,\L/\E)\frac{x^{s+1}}{s(s+1)}\,\dd s.
\end{equation}

\section{Some computations with Abelian Artin $L$-functions}\label{sec:3A}
Let $\E\subseteq \L$ be an abelian extension of fields and let $\chi$ be any irreducible character
of $\Gal(\L/\E)$. We will use $L(s,\chi)$ to denote $L(s,\chi,\L/\E)$. Also, set $\delta_\chi = 1$ if
$\chi$ is the trivial character, and $0$ otherwise.

We recall that for each $\chi$ there exist non-negative integers
$a_\chi$, $b_\chi$ such that
\[
a_\chi + b_\chi = \nE
\]
and a positive integer $Q(\chi)$ such that if we define
\begin{equation}\label{eq:3.1A}
\Gamma_\chi(s)
:= \Big[\pi^{-\frac{s}{2}}\Gamma\Big(\frac{s}{2}\Big)\Big]^{a_\chi}
   \Big[\pi^{-\frac{s+1}{2}} \Gamma\Big(\frac{s+1}{2}\Big)\Big]^{b_\chi}
\end{equation}
and
\begin{equation}\label{eq:3.2A}
\xi(s,\chi) := [s(s-1)]^{\delta_\chi} Q(\chi)^{s/2}\Gamma_\chi(s)L(s,\chi),
\end{equation}
then $\xi(s,\chi)$ satisfies the functional equation
\begin{equation}\label{eq:3.3A}
\xi(1-s,\bar\chi) = W(\chi)\xi(s,\chi),
\end{equation}
where $W(\chi)$ is a certain constant of absolute value $1$. Furthermore, $\xi(s,\chi)$ is an entire
function (by class field theory) of order $1$ and does not vanish at $s = 0$, and hence by Hadamard's
product theorem we have
\begin{equation}\label{eq:3.4A}
\xi(s,\chi) = e^{A_\chi+B_\chi s} \prod_{\rho\in Z_\chi} \Big(1 - \frac s\rho\Big) e^{s/\rho}
\end{equation}
for some constants $A(\chi)$ and $B(\chi)$, where $Z_\chi$ is the set of zeros (multiplicity included) of
$\xi(s,\chi)$. They are precisely those zeros $\rho = \beta + i\gamma$ of $L(s,\chi)$ for which $0 <
\beta < 1$, the so-called ``non-trivial zeros'' of $L(s,\chi)$. From now on $\rho$ will denote a
non-trivial zero of $L(s,\chi)$.

Differentiating~\eqref{eq:3.2A} and~\eqref{eq:3.4A} logarithmically we obtain the identity
\begin{equation}\label{eq:3.5A}
\frac{L'}{L}(s,\chi)
  = B_\chi
   + \sum_{\rho} \Big(\frac{1}{s-\rho}+\frac{1}{\rho}\Big)
   - \frac{1}{2}\log Q(\chi)
   - \delta_\chi\Big(\frac{1}{s}+\frac{1}{s-1}\Big)
   - \frac{\Gamma'_\chi}{\Gamma_\chi}(s),
\end{equation}
valid identically in the complex variable $s$.\\
Using~\eqref{eq:3.2A}, \eqref{eq:3.3A} and~\eqref{eq:3.5A} one sees that
\begin{equation}\label{eq:3.6A}
\begin{array}{ll}
\displaystyle\frac{L'}{L}(s,\chi) = \frac{a_\chi-\delta_\chi}{s} + r_\chi  + O(s)    & \text{as $s\to 0 $},\\[.4cm]
\displaystyle\frac{L'}{L}(s,\chi) = \frac{b_\chi}{s+1}           + r'_\chi + O(s+1)  & \text{as $s\to -1$},
\end{array}
\end{equation}
where
\begin{align}
\displaystyle
r_\chi  &= B_\chi
         + \delta_\chi
         - \frac{1}{2} \log\frac{Q(\chi)}{\pi^{\nE}}
         - \frac{a_\chi}{2}\frac{\Gamma'}\Gamma(1)
         - \frac{b_\chi}{2}\frac{\Gamma'}\Gamma\Big(\frac{1}{2}\Big),  \label{eq:3.7A}\\[.4cm]
\displaystyle
r'_\chi &=- \frac{L'}{L}(2,\bar{\chi})
         - \log\frac{Q(\chi)}{\pi^{\nE}}
         - \frac{\nE}{2}\frac{\Gamma'}\Gamma\Big(\frac{3}{2}\Big)
         - \frac{\nE}{2}\frac{\Gamma'}\Gamma(1).                       \label{eq:3.8A}
\end{align}
Comparing~\eqref{eq:3.7A} and~\eqref{eq:3.5A} with $s=2$, we further get
\begin{equation}\label{eq:3.9A}
r_\chi = \frac{L'}{L}(2,\chi)
        - \sum_{\rho}\frac{2}{\rho(2-\rho)}
        + \frac{5}{2}\delta_\chi
        + b_\chi.
\end{equation}

Shifting the axis of integration in~\eqref{eq:2.5A} arbitrarily far to the left, we collect the terms
coming from the pole of $L$ at $s=1$ (if any), the non-trivial zeros, the pole of the kernel (and of
$L'/L$, if any) at $s=0$, the pole of the kernel (and of $L'/L$, if any) at $s=-1$ and all the remaining
terms coming from the trivial zeros of $L$. This procedure gives the identity
\begin{equation}\label{eq:3.10A}
I_\chi(x) = \delta_\chi\frac{x^2}{2}
          - \sum_{\rho\in Z_\chi}\frac{x^{\rho+1}}{\rho(\rho+1)}
          - x r_\chi
          + r'_\chi
          + R_\chi(x)
\qquad
\forall x > 1,
\end{equation}
where $r_\chi$ and $r'_\chi$ are defined in~\eqref{eq:3.6A} and $R_\chi(x)$ is the explicit function
\begin{align*}
f_1(x)    &:= \sum_{r=1}^{\infty}\frac{x^{1-2r}}{2r(2r-1)},
\qquad
f_2(x)     := \sum_{r=2}^{\infty}\frac{x^{2-2r}}{(2r-1)(2r-2)},     \\
R_\chi(x) &:= - (a_\chi-\delta_\chi)(x\log x-x)
              + b_\chi    (\log x + 1)
              - a_\chi     f_1(x)
              - b_\chi     f_2(x).
\end{align*}
(with $x > 1$). The correctness of this procedure is proved in a way similar
to~\cite[\S~6]{LagariasOdlyzko}, further simplified by the fact that the integral is absolutely
convergent on vertical lines (see also~\cite[Ch.~IV Sec.~4, p. 73]{Ingham2}).\\
According to~\eqref{eq:2.4A}, in order to proceed we need to know the effect of the $\MC$ operator on
each term in~\eqref{eq:3.10A}. To this effect, we recall a few lemmas that we will need in the following.
\begin{lemma}[{\!\!\cite[Lemma~1]{GrenieMolteni9}}]\label{lem:3.1A}
Let
\[
\Sg:=
\begin{cases}
r_1(\L) +  r_2(\L)   & \text{if $g$ has order 1}, \\
r_2(\L) - 2r_2(\E)   & \text{if $g$ has order 2}, \\
0                    & \text{otherwise}.
\end{cases}
\]
Moreover let $\delta_C$ be defined to be $1$ if $C$ is the trivial class and $0$ otherwise. Then
\begin{align*}
\MC a_\chi &= \sum_{\chi}\bar{\chi}(g) a_\chi = \Sg,                               \\
\MC b_\chi &= \sum_{\chi}\bar{\chi}(g) b_\chi = \delta_C\nE-\Sg = \delta_C\nL-\Sg.
\end{align*}
\end{lemma}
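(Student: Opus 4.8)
The plan is to read off the pair $(a_\chi,b_\chi)$ from the archimedean Euler factors of the $L$-function $L(s,\chi,\L/\E)$ (an abelian Artin $L$-function, equivalently a Hecke $L$-function), then to apply the operator $\MC$ using only character orthogonality on the cyclic group $H=\langle g\rangle$, and finally to conclude with an elementary count of the archimedean places of $\L$. Concretely, for an archimedean place $v$ of $\E$ let $H_v\le H$ be the decomposition group of a place of $\L$ above $v$; as the local extension at $v$ is one of $\C/\C$, $\R/\R$, $\C/\R$, the group $H_v$ is trivial when $v$ is complex and has order $1$ or $2$ when $v$ is real. The local factor of $L(s,\chi,\L/\E)$ at $v$ is $[\pi^{-\frac s2}\Gamma(\frac s2)][\pi^{-\frac{s+1}2}\Gamma(\frac{s+1}2)]$ when $v$ is complex, while for $v$ real it equals $\pi^{-\frac s2}\Gamma(\frac s2)$ if $\chi|_{H_v}$ is trivial (in particular whenever $H_v=\{1\}$) and $\pi^{-\frac{s+1}2}\Gamma(\frac{s+1}2)$ otherwise. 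Matching the resulting product against~\eqref{eq:3.1A} gives
\[
a_\chi = r_2(\E) + \#\{v\ \text{real place of}\ \E:\ \chi|_{H_v}=1\},\qquad b_\chi=\nE-a_\chi,
\]
the relation $a_\chi+b_\chi=r_1(\E)+2r_2(\E)=\nE$ serving as a consistency check.

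Next I would apply $\MC$, i.e.\ multiply by $\bar\chi(g)$ and sum over $\chi\in\widehat H$:
\[
\MC a_\chi = r_2(\E)\sum_{\chi\in\widehat H}\bar\chi(g)
           + \sum_{v\ \text{real of}\ \E}\ \sum_{\substack{\chi\in\widehat H\\ \chi|_{H_v}=1}}\bar\chi(g).
\]
Since $g$ generates $H$, the values $\chi(g)$ run exactly once over the $|g|$-th roots of unity, so $\sum_{\chi\in\widehat H}\bar\chi(g)$ is $1$ when $|g|=1$ — equivalently when $C$ is the trivial class, so this equals $\delta_C$ — and is $0$ otherwise. Likewise, the characters of $H$ trivial on $H_v$ are precisely the characters of $H/H_v$, so $\sum_{\chi|_{H_v}=1}\bar\chi(g)$ equals $|H/H_v|$ if $g\in H_v$ and $0$ otherwise; and because $g$ generates $H$ while $|H_v|\le 2$, we have $g\in H_v\iff H_v=H\iff|g|\le 2$, in which case $|H/H_v|=1$.

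Finally I would split on $|g|$. If $|g|=1$ then $\E=\L$, every $H_v$ is trivial (hence equal to $H$), each of the $r_1(\L)$ real places contributes $1$ to the second sum, and the first sum contributes $r_2(\L)$, for a total $r_1(\L)+r_2(\L)=\Sg$. If $|g|=2$ the first sum vanishes and a real place $v$ of $\E$ contributes $1$ exactly when $H_v=H$, that is, when $v$ ramifies in $\L/\E$ so that its place of $\L$ is complex; counting the archimedean places of $\L$ — each complex place of $\E$ splits into two complex places of $\L$, each split real place of $\E$ into two real places, each ramified real place of $\E$ into one complex place — yields $r_1(\L)=2\#\{\text{split real }v\}$ and $r_2(\L)=2r_2(\E)+\#\{\text{ramified real }v\}$, so the contribution is $\#\{\text{ramified real }v\}=r_2(\L)-2r_2(\E)=\Sg$. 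If $|g|\ge 3$ both sums vanish and the total is $0=\Sg$. This establishes $\MC a_\chi=\Sg$; the formula for $\MC b_\chi$ then follows from $b_\chi=\nE-a_\chi$, since $\MC b_\chi=\nE\sum_{\chi}\bar\chi(g)-\MC a_\chi=\delta_C\nE-\Sg$, and $\nE=\nL$ exactly when $C$ is trivial. The only step requiring genuine care is pinning down the archimedean local factors and, above all, the place count in the case $|g|=2$; everything else is orthogonality on a cyclic group.
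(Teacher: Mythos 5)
Your proof is correct, and it is essentially the argument behind the cited result: the paper itself only quotes this as \cite[Lemma~1]{GrenieMolteni9}, and the proof there likewise reads $a_\chi$, $b_\chi$ off the archimedean local factors (equivalently via $\chi(\sigma_v)$ at the real places) and then applies orthogonality on the cyclic group $\langle g\rangle$, with the same place-counting in the order-$2$ case. No gaps.
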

From now on, we assume that $\L/\E$ is cyclic, and let $Z$ be the multiset of zeros of the Dedekind zeta
function $\zeta_\L$. Thus $Z$ is the disjoint union of the sets $Z_\chi$ for
$\chi\in\widehat{\Gal(\L/\E)}$.
\begin{lemma}[{\!\!\cite[Lemma~2]{GrenieMolteni9}}]\label{lem:3.2A}
Let $f$ be any complex function with $\sum_{\rho\in Z} |f(\rho)|<\infty$. Then
\[
\MC\sum_{\rho\in Z_{\chi}} f(\rho)
 = \sum_{\rho\in Z} \epsilon(\rho) f(\rho)
\]
where, for any $\rho\in Z$, $|\epsilon(\rho)|=1$ and $\epsilon(\overline{\rho}) =
\overline{\epsilon(\rho)}$.
\end{lemma}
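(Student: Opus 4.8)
The plan is to unfold the operator $\MC$, interchange the (absolutely convergent) sums, and simply read off $\epsilon(\rho)$ from the decomposition $Z=\bigsqcup_\chi Z_\chi$. By the definition of $\MC$,
\[
\MC\sum_{\rho\in Z_\chi}f(\rho)=\sum_{\chi\in\widehat{\Gal(\L/\E)}}\bar\chi(g)\sum_{\rho\in Z_\chi}f(\rho),
\]
and since $|\bar\chi(g)|\leq1$ this double sum is dominated by $\sum_\chi\sum_{\rho\in Z_\chi}|f(\rho)|=\sum_{\rho\in Z}|f(\rho)|<\infty$, so it converges absolutely and may be reorganised as a single sum over the multiset $Z$. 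Each occurrence of a zero in $Z$ belongs to exactly one of the multisets $Z_\chi$ (this is precisely the statement, recalled just above the lemma, that $Z$ is the disjoint union of the $Z_\chi$, equivalently the factorisation $\zeta_\L=\prod_\chi L(s,\chi,\L/\E)$), and I would set $\epsilon(\rho):=\bar\chi(g)$ for the $\chi$ with $\rho\in Z_\chi$. This gives the identity $\MC\sum_{\rho\in Z_\chi}f(\rho)=\sum_{\rho\in Z}\epsilon(\rho)f(\rho)$ immediately, and reduces the lemma to the two assertions about $\epsilon$.

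The equality $|\epsilon(\rho)|=1$ is free: $\Gal(\L/\E)$ being finite, each value $\chi(g)$ is a root of unity. For $\epsilon(\overline{\rho})=\overline{\epsilon(\rho)}$ I would use that $L(s,\chi,\L/\E)$ is a Hecke $L$-function over $\E$ whose Dirichlet coefficients satisfy $\overline{\chi(\iI)}=\bar\chi(\iI)$, whence $\overline{L(s,\chi,\L/\E)}=L(\overline{s},\bar\chi,\L/\E)$. Consequently $\rho\mapsto\overline{\rho}$ is a multiplicity-preserving bijection $Z_\chi\to Z_{\bar\chi}$, and, letting $\chi$ vary, these glue to an involution of the multiset $Z$ carrying the copy of $Z_\chi$ onto the copy of $Z_{\bar\chi}$. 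If $\rho$ lies in the copy of $Z_\chi$, its image $\overline{\rho}$ under this involution lies in the copy of $Z_{\bar\chi}$, so $\epsilon(\overline{\rho})=\overline{\bar\chi}(g)=\chi(g)=\overline{\bar\chi(g)}=\overline{\epsilon(\rho)}$, as wanted.

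The proof is entirely formal and I do not expect a real obstacle; the only delicate point will be the bookkeeping of the several occurrences that a given complex number may have in $Z$, so that the symbol ``$\overline{\rho}$'' always denotes the occurrence produced by the involution above and not an arbitrary occurrence of the same number. For completeness one should also note the degenerate case $\overline{\rho}=\rho$ (a zero on the real axis) with $\chi=\bar\chi$ self-conjugate: then $\chi(g)$ is real, so $\epsilon(\rho)=\bar\chi(g)\in\{\pm1\}$ and the relation $\epsilon(\overline{\rho})=\overline{\epsilon(\rho)}$ holds trivially. No analytic input beyond the absolute convergence hypothesis on $\sum_{\rho\in Z}|f(\rho)|$ is needed.
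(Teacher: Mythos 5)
Your proof is correct and is essentially the argument behind \cite[Lemma~2]{GrenieMolteni9} (the present paper only cites the lemma rather than reproving it): unfold $\MC$, use the factorisation $\zeta_\L=\prod_\chi L(s,\chi,\L/\E)$ to identify $Z$ with $\bigsqcup_\chi Z_\chi$, set $\epsilon(\rho)=\bar\chi(g)$ on the copy of $Z_\chi$, and get the conjugation property from $\overline{L(\bar s,\chi)}=L(s,\bar\chi)$. Your care with the multiset bookkeeping and the self-conjugate case is exactly the right level of detail; no gap.
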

The following lemma comes from~\eqref{eq:3.9A} and Lemmas~\ref{lem:3.1A} and~\ref{lem:3.2A}.
\begin{lemma}[{\!\!\cite[Lemma~3]{GrenieMolteni9}}]\label{lem:3.3A}
\[
\MC r_\chi = 2\sum_{\rho\in Z} \frac{\epsilon(\rho)}{\rho(2-\rho)}
             - \frac{\nL}{\nK|C|}\sum_{\iI\subseteq \OK}\theta(C;\iI)\frac{\Lambda_\K(\iI)}{(\Norm\iI)^{2}}
             + \nL\delta_C
             - \Sg
             + \frac{5}{2}.
\]
\end{lemma}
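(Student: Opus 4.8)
The plan is to push the identity~\eqref{eq:3.9A} through the operator $\MC$. Since $\MC$ is linear in its argument (a function of $\chi$), it suffices to evaluate $\MC\frac{L'}{L}(2,\chi)$, $\MC\sum_{\rho\in Z_\chi}\frac{2}{\rho(2-\rho)}$, $\MC\delta_\chi$ and $\MC b_\chi$ in turn and then add; the five terms of the asserted identity will be accounted for by these four contributions after one harmless reordering.

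Three of the four are read off directly. For $\MC\frac{L'}{L}(2,\chi)$ I would specialise~\eqref{eq:2.2A} to $s=2$: by the definition of $\MC$ this equals $-\frac{|G|}{|C|}K(C;2)$, while the series defining $K(C;s)$ converges absolutely at $s=2>1$ and yields $K(C;2)=\sum_{\iI\subseteq\OK}\theta(C;\iI)\Lambda_\K(\iI)(\Norm\iI)^{-2}$; combined with $|G|=[\L:\K]=\nL/\nK$ this reproduces the term $-\frac{\nL}{\nK|C|}\sum_{\iI\subseteq\OK}\theta(C;\iI)\Lambda_\K(\iI)(\Norm\iI)^{-2}$. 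For $\MC b_\chi$ I would quote Lemma~\ref{lem:3.1A}, giving $\delta_C\nL-\Sg$ (when $\delta_C=1$ the element $g$ is trivial, hence $H$ and $\L/\E$ are trivial and $\nE=\nL$, which is why Lemma~\ref{lem:3.1A} may already be written with $\nL$). For $\MC\delta_\chi$, note that $\delta_\chi$ is the characteristic function of the trivial character of $H$, which is present whatever $H$ is, so $\MC\delta_\chi$ reduces to the single term $\bar\chi(g)=1$ attached to it; hence the summand $\tfrac52\delta_\chi$ contributes the constant $\tfrac52$.

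The one step requiring care is the sum over zeros. First I would check the hypothesis of Lemma~\ref{lem:3.2A} for $f(\rho)=\frac{2}{\rho(2-\rho)}$: the non-trivial zeros of $\zeta_\L$ stay away from $s=0$ and $s=2$, so those with $|\gamma|\le 1$ give a finite contribution, while for $|\gamma|>1$ one has $|\rho(2-\rho)|\ge\gamma^2$ and, by a standard zero-density estimate $\#\{\rho\in Z\colon|T-\gamma|\le 1\}=O(\lDL+\nL\log(|T|+2))$, the series $\sum_{\rho\in Z}|f(\rho)|$ converges. Lemma~\ref{lem:3.2A} then converts $\MC\sum_{\rho\in Z_\chi}\frac{2}{\rho(2-\rho)}$ into $2\sum_{\rho\in Z}\frac{\epsilon(\rho)}{\rho(2-\rho)}$; this is exactly where the standing assumption that $\L/\E$ is cyclic enters, through the fact that the multisets $Z_\chi$ partition $Z$. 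Adding the four contributions and rearranging the five resulting terms yields the displayed identity, the precise nature of the unimodular weights $\epsilon(\rho)$ being immaterial beyond the relations $|\epsilon(\rho)|=1$ and $\epsilon(\bar\rho)=\overline{\epsilon(\rho)}$ recorded in Lemma~\ref{lem:3.2A}. Thus the only inputs beyond plain bookkeeping are the identification of $\MC\frac{L'}{L}(2,\chi)$ with $K(C;2)$ via~\eqref{eq:2.2A} and the convergence check that legitimises applying Lemma~\ref{lem:3.2A} to the zero sum; there is no serious obstacle, the proof being a short and careful transcription of~\eqref{eq:3.9A}.
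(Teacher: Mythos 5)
Your proposal is correct and follows exactly the route the paper intends: the paper's entire ``proof'' is the one-line remark that the lemma ``comes from~\eqref{eq:3.9A} and Lemmas~\ref{lem:3.1A} and~\ref{lem:3.2A}'', and you have filled in precisely those steps — identifying $\MC\frac{L'}{L}(2,\chi)=-\frac{|G|}{|C|}K(C;2)$ via~\eqref{eq:2.2A} with $|G|=\nL/\nK$, invoking Lemma~\ref{lem:3.1A} for $\MC b_\chi$, and Lemma~\ref{lem:3.2A} (after the convergence check) for the zero sum, with the sign of the unimodular weights absorbed into $\epsilon(\rho)$.
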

\begin{lemma}[{\!\!\cite[Lemma~5]{GrenieMolteni9}}]\label{lem:3.4A}
Define for any $x>1$, $R_C(x):=\MC R_\chi(x)$. Then
\[
R_C(x) =\int_0^x\log u\dd u - \Sg\int_1^{x+1}\log u\dd u
         + \delta_C\frac{\nL}{2}\Big[\log(x^2-1) + x \log\Big(\frac{x+1}{x-1}\Big)\Big].
\]
\end{lemma}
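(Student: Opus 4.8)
\textbf{Proof strategy for Lemma~\ref{lem:3.4A}.}
The plan is to apply the operator $\MC$ directly to the closed formula for $R_\chi(x)$ given just before Lemma~\ref{lem:3.4A}, using the linearity of $\MC$ and the fact that the only $\chi$-dependence in $R_\chi(x)$ sits in the integers $a_\chi$, $b_\chi$, and $\delta_\chi$. Writing
\[
R_\chi(x) = -(a_\chi-\delta_\chi)(x\log x - x) + b_\chi(\log x + 1) - a_\chi f_1(x) - b_\chi f_2(x),
\]
I would group the terms as a linear form in $a_\chi$, $b_\chi$, $\delta_\chi$ with coefficients that are functions of $x$ only, so that $R_C(x) = \MC R_\chi(x)$ becomes the same linear form in $\MC a_\chi$, $\MC b_\chi$, $\MC \delta_\chi$. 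Here $\MC \delta_\chi = \bar\chi_0(g) = 1$ if $\chi_0$ is the trivial character, i.e. this contributes $\delta_C$ times the relevant coefficient (one must check $\MC\delta_\chi=\delta_C$; since $\delta_\chi$ is the indicator of the trivial character, $\MC\delta_\chi = \sum_\chi \bar\chi(g)\delta_\chi = 1$ always, but when combined with the terms it appears the constant piece is absorbed — actually $\MC\delta_\chi=1$ unconditionally, so I would keep careful track of whether the factor is $\delta_C$ or $1$; given the final formula has a $\delta_C$ in front of the last bracket, the $\delta_\chi$ contributions must combine with the $a_\chi$, $b_\chi$ pieces so that only the $\delta_C$-proportional remnant survives, which forces the identity $\MC(a_\chi-\delta_\chi)\cdot(\text{coeff}) + \dots$; I will verify this bookkeeping explicitly).

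The arithmetic inputs are exactly Lemma~\ref{lem:3.1A}: $\MC a_\chi = \Sg$ and $\MC b_\chi = \delta_C\nL - \Sg$. Substituting, the $(x\log x - x)$-coefficient becomes $-(\Sg - \MC\delta_\chi)$, the $(\log x + 1)$-coefficient becomes $\delta_C\nL - \Sg$, the $f_1$-coefficient becomes $-\Sg$, and the $f_2$-coefficient becomes $-(\delta_C\nL - \Sg)$. So
\[
R_C(x) = -(\Sg-1)(x\log x - x) + (\delta_C\nL-\Sg)(\log x + 1) - \Sg f_1(x) - (\delta_C\nL-\Sg)f_2(x),
\]
where I have used $\MC\delta_\chi=1$. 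The next step is to collect the $\Sg$-proportional part and the $\delta_C\nL$-proportional part and the constant part separately, and to recognize each as the stated integrals. For the $\Sg$-part: $(x\log x - x) + f_1(x)$ plus the matching $f_2$ contribution should assemble into $-\int_1^{x+1}\log u\,\mathrm{d}u$. The key identity to establish is a Taylor-type expansion: $\int_1^{x+1}\log u\,\mathrm{d}u = (x+1)\log(x+1) - (x+1) + 1$, and one checks that $x\log x - x + (\log x+1) + f_1(x) + f_2(x)$ telescopes to this, using $\log(x+1) = \log x + \log(1+1/x)$ and the series expansions defining $f_1,f_2$ — indeed $f_1$ and $f_2$ are precisely the "tail" series whose leading terms have been separated off. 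For the constant (non-$\Sg$, non-$\delta_C$) part, namely the piece coming from $\MC\delta_\chi=1$ in the first term, we get $+(x\log x - x) = \int_0^x \log u\,\mathrm{d}u$, which is the first summand in the claimed formula. For the $\delta_C\nL$-part: $(\log x + 1) - f_2(x)$ should equal $\tfrac12\log(x^2-1) + \tfrac{x}{2}\log\!\big(\tfrac{x+1}{x-1}\big)$ after multiplying by $\nL$ — this again reduces to summing the even logarithmic series $\sum_{r\ge1} x^{-2r}/r = -\log(1-x^{-2})$ and $\sum_{r\ge 1}x^{-2r}/(2r-1)$ type series against the explicit $f_2$.

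The main obstacle is the series-summation bookkeeping: verifying that $x\log x - x + \log x + 1 + f_1(x) + f_2(x) = \int_1^{x+1}\log u\,\mathrm{d}u$ and that $\log x + 1 - f_2(x) = \tfrac12\log(x^2-1)+\tfrac{x}{2}\log\!\big(\tfrac{x+1}{x-1}\big)$. I would handle the first by differentiating both sides in $x$ (the derivative of the right side is $\log(x+1)$, and the derivative of the left telescopes cleanly because $f_1'(x) = -\sum_{r\ge1}x^{-2r}/(2r)\cdot\frac{1}{x}$-type and $f_2'$ combine to $\log(1+1/x) - 1/x$ plus corrections), then match at $x\to\infty$ where all the $f_i$ vanish and both sides behave like $(x+1)\log(x+1)-(x+1)$ versus $x\log x - x + \log x$; careful asymptotic matching pins the additive constant. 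The second identity is even more direct: $f_2(x) = \sum_{r\ge 2} x^{2-2r}/((2r-1)(2r-2))$; reindex $m=r-1\ge 1$ to get $\sum_{m\ge1} x^{-2m}/((2m+1)(2m))$, use partial fractions $\frac{1}{2m(2m+1)} = \frac{1}{2m}-\frac{1}{2m+1}$, and recognize $\sum_{m\ge1}\frac{x^{-2m}}{2m} = -\tfrac12\log(1-x^{-2})$ while $\sum_{m\ge1}\frac{x^{-2m}}{2m+1} = \tfrac{x}{2}\log\!\big(\tfrac{x+1}{x-1}\big) - 1$ (this last from $\sum_{m\ge0}\frac{t^{2m+1}}{2m+1} = \tfrac12\log\tfrac{1+t}{1-t}$ with $t=1/x$, after multiplying by $x$ and peeling off the $m=0$ term). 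Assembling these gives exactly the bracketed expression. Throughout, convergence for $x>1$ is automatic since $|1/x|<1$, and the $\MC$-exchange with the (finite) sum over $\chi$ needs no justification. I expect the whole proof to be about one page of elementary manipulation once the two series identities above are nailed down.
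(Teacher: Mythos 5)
Your strategy is the right one, and indeed the only natural one: the present paper does not prove this lemma at all (it imports it verbatim from \cite[Lemma~5]{GrenieMolteni9}), and the proof there is exactly the direct computation you describe --- apply $\MC$ termwise, feed in $\MC a_\chi=\Sg$, $\MC b_\chi=\delta_C\nL-\Sg$ from Lemma~\ref{lem:3.1A} together with $\MC\delta_\chi=1$, and evaluate the two elementary series. Your worry about $\delta_C$ versus $1$ resolves the way you suspect: the coefficient of $\MC\delta_\chi=1$ is $+(x\log x-x)=\int_0^x\log u\dd u$, which is precisely why the first term of the stated formula carries no $\delta_C$. Your second series identity, $(\log x+1)-f_2(x)=\tfrac12\log(x^2-1)+\tfrac{x}{2}\log\big(\tfrac{x+1}{x-1}\big)$, is stated and derived correctly.

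One sign slip needs fixing. Your own coefficient bookkeeping (correctly) puts $+\Sg f_2(x)$ into the $\Sg$-group, i.e.\ the bracket multiplying $-\Sg$ contains $-f_2(x)$; yet the ``key identity'' you then propose to verify is written with $+f_2(x)$, and in that form it is false: one gets $(x-1)\log(x-1)+2\log x+2-x$, not $\int_1^{x+1}\log u\dd u$. The correct identity is
\[
(x\log x - x) + (\log x + 1) + f_1(x) - f_2(x) = (x+1)\log(x+1) - x = \int_1^{x+1}\log u\dd u,
\]
which follows from $f_1(x)=\tfrac12\log\big(\tfrac{x+1}{x-1}\big)+\tfrac{x}{2}\log(x^2-1)-x\log x$ together with your closed form for $f_2$. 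Since you planned to pin down the identity by differentiation and asymptotic matching, you would have caught this in execution; with the sign corrected, the three groups assemble exactly into the claimed formula and the proof is complete.
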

\begin{lemma}[{\!\!\cite[Lemma~10]{GrenieMolteni9}}]\label{lem:3.5A}
Assume GRH. Then
\[
\sum_{\rho \in Z} \frac{1}{|\rho(\rho+1)|} \leq 0.5375\lDL - 1.0355\nL + 5.3879 - 0.2635 r_1(\L).
\]
\end{lemma}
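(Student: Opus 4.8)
The plan is to exploit GRH to put every $\rho=\beta+i\gamma\in Z$ on the critical line $\beta=\tfrac{1}{2}$, so that
\[
\sum_{\rho\in Z}\frac{1}{|\rho(\rho+1)|}=\sum_{\rho\in Z}\frac{1}{\sqrt{(\tfrac{1}{4}+\gamma^2)(\tfrac{9}{4}+\gamma^2)}},
\]
and then to linearise the product under the square root by the elementary identity $\int_0^{\pi/2}(a\cos^2\theta+b\sin^2\theta)^{-1}\dd\theta=\tfrac{\pi}{2}(ab)^{-1/2}$. With $a=\tfrac{1}{4}+\gamma^2$ and $b=\tfrac{9}{4}+\gamma^2$ one has $a\cos^2\theta+b\sin^2\theta=\gamma^2+c(\theta)^2$, where $c(\theta):=\tfrac{1}{2}\sqrt{1+8\sin^2\theta}\in[\tfrac{1}{2},\tfrac{3}{2}]$, hence
\[
\frac{1}{|\rho(\rho+1)|}=\frac{2}{\pi}\int_0^{\pi/2}\frac{\dd\theta}{\gamma^2+c(\theta)^2}.
\]
Since all terms are non-negative, summing over $Z$ and interchanging the sum and the integral (Tonelli) reduces everything to estimating the one-parameter sums $\sum_{\rho\in Z}(\gamma^2+c^2)^{-1}$, uniformly for $\tfrac{1}{2}<c\le\tfrac{3}{2}$.

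For such a $c$ I would use the Hadamard factorisation of the completed Dedekind zeta function $\xi_\L(s)=s(s-1)\DL^{s/2}\Gamma_\L(s)\zeta_\L(s)$, with $\Gamma_\L(s)=[\pi^{-s/2}\Gamma(\tfrac{s}{2})]^{r_1(\L)+r_2(\L)}[\pi^{-(s+1)/2}\Gamma(\tfrac{s+1}{2})]^{r_2(\L)}$ (cf.~\eqref{eq:3.3A}--\eqref{eq:3.4A}). Combined with the functional equation $\xi_\L(1-s)=\xi_\L(s)$ read at $s=\tfrac{1}{2}$, which forces $B+\sum_{\rho\in Z}\rho^{-1}=0$ for the Hadamard constant $B$ (here GRH enters: the zeros lie on $\Re s=\tfrac12$ in conjugate pairs), this yields $\tfrac{\xi_\L'}{\xi_\L}(s)=\sum_{\rho\in Z}(s-\rho)^{-1}$, and at a real point $s=\tfrac{1}{2}+c$ the imaginary parts cancel in conjugate pairs, so $\sum_{\rho\in Z}(\gamma^2+c^2)^{-1}=c^{-1}\tfrac{\xi_\L'}{\xi_\L}(\tfrac{1}{2}+c)$. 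Therefore
\[
\sum_{\rho\in Z}\frac{1}{|\rho(\rho+1)|}=\frac{2}{\pi}\int_0^{\pi/2}\frac{1}{c(\theta)}\,\frac{\xi_\L'}{\xi_\L}\bigl(\tfrac{1}{2}+c(\theta)\bigr)\dd\theta.
\]
Into this I substitute the logarithmic derivative $\tfrac{\xi_\L'}{\xi_\L}(s)=\tfrac{1}{s}+\bigl(\tfrac{1}{s-1}+\tfrac{\zeta_\L'}{\zeta_\L}(s)\bigr)+\tfrac{1}{2}\lDL+\tfrac{\Gamma_\L'}{\Gamma_\L}(s)$ and integrate term by term against $\tfrac{2}{\pi}c(\theta)^{-1}\dd\theta$. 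The $\tfrac{1}{2}\lDL$ term gives $\lDL$ with coefficient $\tfrac{1}{\pi}\int_0^{\pi/2}c(\theta)^{-1}\dd\theta$; the substitution $\phi=\tfrac{\pi}{2}-\theta$ identifies this integral with $\tfrac{2}{3}K$, where $K$ is the complete elliptic integral of the first kind of modulus $\tfrac{2\sqrt{2}}{3}$, so the coefficient equals $\tfrac{2}{3\pi}K\approx0.5366$. The $\tfrac{\Gamma_\L'}{\Gamma_\L}$ term splits into $r_1(\L)+r_2(\L)$ and $r_2(\L)$ weighted copies of $\tfrac{\Gamma'}{\Gamma}$ evaluated on $[\tfrac{1}{2},1]$ and on $[1,\tfrac{3}{2}]$ respectively; bounding $\tfrac{\Gamma'}{\Gamma}$ there by monotonicity and using $r_2(\L)=\tfrac{1}{2}(\nL-r_1(\L))$ produces the coefficients of $\nL$ and of $r_1(\L)$, while the $\tfrac{1}{s}$ term and the bracket $\tfrac{1}{s-1}+\tfrac{\zeta_\L'}{\zeta_\L}(s)$ produce the absolute constant.

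The only real obstacle is the lower endpoint $\theta\to0$: there $c(\theta)\to\tfrac{1}{2}$, $s\to1$, and $\tfrac{1}{s-1}$ diverges, cancelled solely by the pole of $\zeta_\L$, so the bracket $\tfrac{1}{s-1}+\tfrac{\zeta_\L'}{\zeta_\L}(s)$ must be kept together and bounded uniformly near $s=1$, i.e.\ one needs an explicit upper bound, linear in $\nL$ and free of $\lDL$, for $\lim_{s\to1}\bigl(\tfrac{1}{s-1}+\tfrac{\zeta_\L'}{\zeta_\L}(s)\bigr)$. Equivalently, one may write the whole sum by Abel summation as $\int_0^\infty|g'(t)|\,N_\L(t)\,\dd t$ with $g(t)=[(t^2+\tfrac{1}{4})(t^2+\tfrac{9}{4})]^{-1/2}$ and $N_\L(t)$ the zero-counting function, and feed in an explicit GRH bound for $N_\L$; in either route the needed input — a bound of the shape $\sum_{\rho\in Z}|\rho|^{-2}\le\lDL+O(\nL)$ or the companion bound on $N_\L$ — is among the preparatory lemmas of~\cite{GrenieMolteni9}. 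Granting it, the few lowest zeros are absorbed by the crude estimate $|\rho(\rho+1)|^{-1}\le\tfrac{4}{3}$ over a short initial range (which explains the small gap between $0.5366$ and the stated $0.5375$), and what is left is a finite, if laborious, numerical optimisation — fixing the cut-off, bounding $\tfrac{\Gamma'}{\Gamma}$ and $-\tfrac{\zeta_\L'}{\zeta_\L}$ on $[1,2]$, and collecting the decimals — to reach the constants $0.5375$, $-1.0355$, $5.3879$ and $-0.2635$.
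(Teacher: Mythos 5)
First, note that the paper does not prove this statement: Lemma~\ref{lem:3.5A} is imported verbatim from \cite[Lemma~10]{GrenieMolteni9}, so the comparison here is between your sketch and the proof in that reference, which (as your own ``alternative route'' suggests) proceeds by partial summation of $[(t^2+\tfrac14)(t^2+\tfrac94)]^{-1/2}$ against an explicit GRH estimate for the zero-counting function $N_\L(T)$. Your main device --- writing $|\rho(\rho+1)|^{-1}=\tfrac2\pi\int_0^{\pi/2}(\gamma^2+c(\theta)^2)^{-1}\dd\theta$ and evaluating $\sum_\rho(\gamma^2+c^2)^{-1}=c^{-1}\tfrac{\xi_\L'}{\xi_\L}(\tfrac12+c)$ --- is correct as far as it goes, and you have correctly identified the extremal coefficient $\tfrac{2}{3\pi}K(2\sqrt2/3)\approx 0.5366$ of $\lDL$, which is reassuringly just below the stated $0.5375$.

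However, there is a genuine gap at the endpoint you yourself flag, and your proposed fix does not exist. As $\theta\to 0$ one has $c(\theta)-\tfrac12\sim 2\theta^2$, so the term $\tfrac1{s-1}$ contributes a divergent integral and the bracket $\tfrac1{s-1}+\tfrac{\zeta_\L'}{\zeta_\L}(s)$ must indeed be kept together; but its limit at $s=1$ is the Euler--Kronecker constant $\gamma_\L$, and under GRH this can be as large as a constant times $\llDL$ (Ihara), so no ``explicit upper bound, linear in $\nL$ and free of $\lDL$'' is available. The argument can be repaired --- e.g.\ by bounding $\sum_\rho(\gamma^2+c^2)^{-1}$ on a short initial range $\theta\le\theta_0$ by its monotone limit $\sum_\rho|\rho|^{-2}$, which is controlled by the preparatory lemmas you invoke --- but this re-injects a multiple of $\lDL$ whose size depends on $\theta_0$, and whether the total coefficient stays below $0.5375$ is exactly the kind of quantitative question your sketch defers. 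More generally, every one of the four constants $0.5375$, $-1.0355$, $5.3879$, $-0.2635$ is left to ``a finite, if laborious, numerical optimisation'': for an explicit lemma whose entire content is those constants, this is not a proof but a plausible programme, with one step (the bound on $\gamma_\L$) stated in a form that is actually false.
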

We finally prove three technical lemmas.
\begin{lemma}\label{lem:3.6A}
Assume GRH. Then
\[
\MC r_\chi \leq 1.075\lDL - 1.571\nL + 13.276.
\]
\end{lemma}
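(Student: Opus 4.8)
The plan is to take the exact expression for $\MC r_\chi$ supplied by Lemma~\ref{lem:3.3A} and bound each of its five terms individually under GRH. The only genuinely analytic term is the sum $2\sum_{\rho\in Z}\epsilon(\rho)/(\rho(2-\rho))$; everything else is either an explicit arithmetic quantity or an elementary constant. So first I would write
\[
\MC r_\chi
= 2\sum_{\rho\in Z}\frac{\epsilon(\rho)}{\rho(2-\rho)}
  - \frac{\nL}{\nK|C|}\sum_{\iI}\theta(C;\iI)\frac{\Lambda_\K(\iI)}{(\Norm\iI)^2}
  + \nL\delta_C - \Sg + \frac52 .
\]

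For the zero-sum, the key observation is that under GRH every $\rho\in Z$ has $\rho=\tfrac12+i\gamma$, so $2-\rho=\tfrac32-i\gamma=\overline{1+\rho}$, and hence $|\rho(2-\rho)|=|\rho|\,|\rho+1|$. Combined with $|\epsilon(\rho)|=1$ this gives
\[
\Big|2\sum_{\rho\in Z}\frac{\epsilon(\rho)}{\rho(2-\rho)}\Big|
\leq 2\sum_{\rho\in Z}\frac{1}{|\rho(\rho+1)|},
\]
which is exactly the quantity estimated in Lemma~\ref{lem:3.5A}; plugging that lemma in bounds this contribution by $1.075\lDL - 2.071\nL + 10.7758 - 0.527\,r_1(\L)$. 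Next, the Dirichlet-type series $\sum_{\iI}\theta(C;\iI)\Lambda_\K(\iI)(\Norm\iI)^{-2}$ is non-negative (since $\theta(C;\cdot)\in[0,1]$ and $\Lambda_\K\geq 0$) and its coefficient $-\nL/(\nK|C|)$ is negative, so this whole term is $\leq 0$ and may simply be dropped. For the remaining three terms I would use the crude but safe bounds: $\nL\delta_C\leq\nL$ only helps when $\delta_C=1$, but in fact one wants an upper bound uniform in $C$, so I would instead note that $\nL\delta_C-\Sg$ is, by the definition of $\Sg$ in Lemma~\ref{lem:3.1A}, controlled by elementary inequalities relating $r_1(\L)$, $r_2(\L)$, $r_2(\E)$ and $\nL=r_1(\L)+2r_2(\L)$; in particular $-\Sg\leq 0$ in the generic case $\mathrm{ord}(g)>2$, and in the two special cases one checks $\nL\delta_C-\Sg\leq r_1(\L)/2$ or a similar bound so that the leftover $r_1(\L)$ terms combine favourably with the $-0.527\,r_1(\L)$ already in hand. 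Finally add the constant $\tfrac52$.

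The arithmetic then amounts to assembling $1.075\lDL + (\text{coefficient})\nL + (\text{coefficient})\,r_1(\L) + (\text{constant})$ and checking, using $0\leq r_1(\L)\leq\nL$, that the $r_1(\L)$-terms can be absorbed so that the $\nL$-coefficient becomes $\leq -1.571$ and the constant becomes $\leq 13.276$; concretely, from $-2.071\nL - 0.527\,r_1(\L)$ one keeps $-1.571\nL$ and has a surplus of $-0.5\nL - 0.527\,r_1(\L)$ to cover any positive $r_1(\L)$-contributions and to push the constant $10.7758+2.5=13.2758$ just under $13.276$. The main obstacle — and the only place requiring care rather than bookkeeping — is the case analysis for $\Sg$: one must verify that in each of the three cases of Lemma~\ref{lem:3.1A} the combination $\nL\delta_C-\Sg$ plus the residual $r_1(\L)$ from Lemma~\ref{lem:3.5A} never overwhelms the $-0.5\nL$ slack, which is where the precise definition of $\Sg$ (and the inequality $2r_2(\E)\geq 0$, $r_2(\L)\leq\nL/2$) is used. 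The rest is a direct substitution of the four cited lemmas.
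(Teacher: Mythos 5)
Your proposal follows the paper's proof exactly: drop the nonnegative Dirichlet series term from Lemma~\ref{lem:3.3A}, use $|\rho(2-\rho)|=|\rho(\rho+1)|$ under GRH to invoke Lemma~\ref{lem:3.5A}, and control $\nL\delta_C-\Sg$ by a short case check against Lemma~\ref{lem:3.1A}. One detail to correct: the tentative bound $\nL\delta_C-\Sg\leq r_1(\L)/2$ you float is false (take $\L$ totally imaginary and $g=e$: then $\nL\delta_C-\Sg=r_2(\L)=\nL/2>0=r_1(\L)/2$); the bound that actually holds in all three cases, and the one the paper uses, is $\nL\delta_C-\Sg\leq r_2(\L)\leq \nL/2$. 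With that, $-2.071\nL+\tfrac12\nL=-1.571\nL$ and $10.7758+\tfrac52=13.2758<13.276$, while the leftover $-0.527\,r_1(\L)\leq 0$ is simply discarded rather than being needed to absorb anything.
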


\begin{proof}
By Lemma~\ref{lem:3.3A}, we have
\[
\MC r_\chi \leq 2\sum_{\rho\in Z} \frac{1}{|\rho(2-\rho)|}
             + \nL\delta_C
             - \Sg
             + \frac{5}{2}.
\]
A brief check shows that $\nL\delta_C - \Sg \leq r_2(\L) \leq \frac{1}{2}\nL$. Moreover,
$|\rho(2-\rho)|=|\rho(\rho+1)|$, thus Lemma~\ref{lem:3.5A} applies here and the result
follows.
\end{proof}
\begin{lemma}\label{lem:3.7A}
We have
\[
-\MC r'_\chi \leq \lDL.
\]
\end{lemma}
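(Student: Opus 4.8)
The plan is to expand $-\MC r'_\chi$ by inserting formula~\eqref{eq:3.8A} into the linear operator $\MC$, which splits it into three pieces:
\[
-\MC r'_\chi
 = \MC\frac{L'}{L}(2,\bar\chi)
 + \MC\log Q(\chi)
 + \nE\Big(\tfrac{1}{2}\tfrac{\Gamma'}\Gamma\big(\tfrac{3}{2}\big)+\tfrac{1}{2}\tfrac{\Gamma'}\Gamma(1)-\log\pi\Big)\MC 1 ,
\]
the last coefficient coming out of $\MC$ because it is independent of $\chi$ (the parameters $a_\chi,b_\chi$ that vary with $\chi$ have collapsed into $a_\chi+b_\chi=\nE$). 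The goal is then to show that the first summand is $\leq 0$, the second is $\leq\lDL$, and the third is $\leq 0$; adding these gives the claim. Note the statement is unconditional, so GRH is not needed here.

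The second summand is the one producing the bound $\lDL$. Since $\L/\E$ is abelian, $\zeta_\L(s)=\prod_\chi L(s,\chi,\L/\E)$, so matching the completed functional equations (equivalently, the conductor--discriminant formula) yields $\prod_\chi Q(\chi)=\DL$, hence $\sum_\chi\log Q(\chi)=\lDL$; as each $Q(\chi)$ is a positive integer, $\log Q(\chi)\geq 0$, whence $|\MC\log Q(\chi)|\leq\sum_\chi|\bar\chi(g)|\,\log Q(\chi)=\lDL$. For the third summand, $\MC 1=\sum_\chi\bar\chi(g)=\delta_C\geq 0$ by orthogonality on $\widehat H$ (this is also the $a_\chi+b_\chi=\nE$ consequence of Lemma~\ref{lem:3.1A}), while, since the digamma function is increasing, $\tfrac{\Gamma'}\Gamma(\tfrac{3}{2})+\tfrac{\Gamma'}\Gamma(1)<\tfrac{\Gamma'}\Gamma(2)+\tfrac{\Gamma'}\Gamma(1)=(1-\gamma)+(-\gamma)<2<2\log\pi$, so the parenthesis is negative and the whole summand is $\leq 0$.

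The first summand, $\MC\frac{L'}{L}(2,\bar\chi)$, is the delicate one: an individual $\frac{L'}{L}(2,\bar\chi)$ need not be real, so positivity must come from the $\MC$-combination. I would route this through the inverse conjugacy class $C'$, the one containing $g^{-1}$. Choosing $g^{-1}$ as its distinguished element gives the same cyclic group $H=\langle g^{-1}\rangle$ and the same fixed field $\E$, and since $\bar\chi(g^{-1})=\chi(g)$ and $\chi\mapsto\bar\chi$ permutes $\widehat H$, one has $\mathcal{M}_{C'}\frac{L'}{L}(s,\chi)=\sum_\chi\chi(g)\frac{L'}{L}(s,\chi,\L/\E)=\MC\frac{L'}{L}(s,\bar\chi)$. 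Applying~\eqref{eq:2.2A} with $C'$ in place of $C$ then gives $\MC\frac{L'}{L}(2,\bar\chi)=-\tfrac{|G|}{|C'|}K(C';2)$, and
\[
K(C';2)=\sum_{\iI\subseteq\OK}\theta(C';\iI)\Lambda_\K(\iI)(\Norm\iI)^{-2}\geq 0
\]
because $\theta(C';\cdot)\in[0,1]$ and $\Lambda_\K\geq 0$; hence this summand is $\leq 0$, and altogether $-\MC r'_\chi\leq 0+\lDL+0=\lDL$. The one step that genuinely needs care is this last reduction — verifying that the reindexing by $\bar\chi$ turns $\MC\frac{L'}{L}(2,\bar\chi)$ into an honest Dirichlet series $K(\cdot;2)$ with nonnegative coefficients attached to the inverse class; everything else is direct substitution and elementary estimates.
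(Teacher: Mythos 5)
Your proof is correct and follows essentially the same route as the paper: you split $-\MC r'_\chi$ into the same three pieces, bound the $\log Q(\chi)$ term by the conductor--discriminant formula, observe that the $\Gamma$-term has a negative coefficient while $\MC 1=\delta_C\geq 0$, and handle $\MC\frac{L'}{L}(2,\bar\chi)$ by relating it via~\eqref{eq:2.2A} to the Dirichlet series $K(C_1;2)\geq 0$ attached to the class of $g^{-1}$. The only cosmetic difference is that you reindex $\chi\mapsto\bar\chi$ where the paper uses $\frac{L'}{L}(2,\bar\chi)=\overline{\frac{L'}{L}(2,\chi)}$ (equivalent, since $K(C_1;2)$ is real and $|C_1|=|C|$).
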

\begin{proof}
As a consequence of~\eqref{eq:3.8A}, we have
\[
-\MC r'_\chi
 = \MC \frac{L'}{L}(2,\bar{\chi})
 + \MC \log Q(\chi)
 - \nE\Big(\log \pi - \frac{1}{2}\frac{\Gamma'}{\Gamma}\Big(\frac{3}{2}\Big) - \frac{1}{2}\frac{\Gamma'}{\Gamma}(1)\Big)\MC 1.
\]
Letting $C_1$ to be the class of $g^{-1}$, we see from~\eqref{eq:2.2A} that
\[
\MC \frac{L'}{L}(2,\bar{\chi}) = -\frac{|G|}{|C|} \overline{K(C_1;2)}
\]
which, by definition of $K$, is a negative real. Moreover,
\[
| \MC \log Q(\chi)|
= |\sum_\chi \bar{\chi}(g)\log Q(\chi)|
\leq \sum_\chi \log Q(\chi)
=    \lDL,
\]
by the product formula for conductors. The result follows because $\nE\MC 1=\nL\delta_C\geq 0$ and $\log
\pi - \frac{1}{2}\frac{\Gamma'}{\Gamma}\big(\frac{3}{2}\big) - \frac{1}{2}\frac{\Gamma'}{\Gamma}(1) =
1.41\ldots$ is positive.
\end{proof}
\begin{lemma}\label{lem:3.8A}
If $\L\neq \Q$, for any $x>1$,
\[
-R_C(x) \leq (\nL-1)\int_1^{x+1}\log u\dd u.
\]
\end{lemma}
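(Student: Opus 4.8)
The plan is to start from the exact formula for $R_C(x)$ furnished by Lemma~\ref{lem:3.4A},
\[
R_C(x) =\int_0^x\log u\dd u - \Sg\int_1^{x+1}\log u\dd u
         + \delta_C\frac{\nL}{2}\Big[\log(x^2-1) + x \log\Big(\frac{x+1}{x-1}\Big)\Big],
\]
and to bound $-R_C(x)$ from above by throwing away (or bounding) the nonnegative pieces and keeping only the term $\Sg\int_1^{x+1}\log u\dd u$, which is the only one that can make $-R_C(x)$ large and positive. First I would observe that $\int_0^x\log u\dd u = x\log x - x \geq 0$ for $x>1$, so $-\int_0^x\log u\dd u \leq 0$ and this term only helps; it can be discarded. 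The bracketed Dedekind term is multiplied by $\delta_C\frac{\nL}{2}\geq 0$, and for $x>1$ both $\log(x^2-1)$ can be negative for $x$ near $1$ but $x\log\frac{x+1}{x-1}$ is large and positive there; more to the point, when $C$ is nontrivial $\delta_C=0$ and the term vanishes, while when $C$ is trivial I would need a small separate check that the whole bracket is nonnegative for $x>1$ (it tends to $+\infty$ as $x\to 1^+$ and to $0$ as $x\to\infty$, and is monotone), so that $-\delta_C\frac{\nL}{2}[\cdots]\leq 0$ as well. Hence
\[
-R_C(x) \leq \Sg\int_1^{x+1}\log u\dd u .
\]

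The remaining task is purely to bound $\Sg$ by $\nL-1$ under the hypothesis $\L\neq\Q$. I would go back to the definition of $\Sg$ in Lemma~\ref{lem:3.1A} and treat the three cases. If $g$ has order $1$, then $\Sg = r_1(\L)+r_2(\L)$; since $r_1(\L)+2r_2(\L) = \nL$ we get $\Sg = \nL - r_2(\L) \leq \nL - 1$ unless $r_2(\L)=0$, i.e. $\L$ is totally real, in which case $\Sg = r_1(\L) = \nL$; but $g$ of order $1$ forces $H=\langle g\rangle$ trivial and $\E=\L$, and then the relevant bound $\Sg\leq\nL-1$ must be salvaged by noting that $\L\neq\Q$ gives $\nL\geq 2$ and... here I need to be careful, because for $\L$ totally real of degree $\geq 2$ with $g=1$ one really has $\Sg=\nL$, so the clean bound $\Sg\leq\nL-1$ would fail. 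The resolution is that when $g$ has order $1$, the class $C$ is trivial, $\delta_C=1$, and one should not discard but rather exploit the Dedekind bracket term, whose negative contribution $-\delta_C\frac{\nL}{2}[\cdots]$ — wait, that term is also $\leq 0$. So instead the correct approach in this case is to keep the $-\int_0^x\log u\dd u = -(x\log x - x)$ term and the Dedekind term together against $\Sg\int_1^{x+1}\log u\dd u$ with $\Sg=\nL$; comparing $\nL\int_1^{x+1}\log u\dd u$ with $(\nL-1)\int_1^{x+1}\log u\dd u + \int_0^x\log u\dd u$ amounts to checking $\int_1^{x+1}\log u\dd u \leq \int_0^x\log u\dd u + (\text{nonneg Dedekind term})$, i.e. $\int_1^{x+1}\log u\dd u - \int_0^x \log u\dd u \leq \delta_C\frac{\nL}{2}[\log(x^2-1)+x\log\tfrac{x+1}{x-1}]$, and the left side equals $\int_x^{x+1}\log u\dd u - \int_0^1\log u\dd u$, a bounded quantity one compares against the $\geq 0$ right side (which for $g=1$ has coefficient $\frac{\nL}{2}\geq 1$). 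This elementary inequality in $x$ should hold for all $x>1$.

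For $g$ of order $2$: here $\Sg = r_2(\L) - 2r_2(\E)$. Since $r_2(\L)\leq \frac{1}{2}\nL$ and $r_2(\E)\geq 0$, we get $\Sg \leq \frac{1}{2}\nL \leq \nL - 1$ whenever $\nL\geq 2$, which holds because $\L\neq\Q$. For $g$ of order $>2$: $\Sg = 0 \leq \nL - 1$ trivially. So in all cases except the degenerate totally-real $g=1$ situation we immediately have $-R_C(x)\leq \Sg\int_1^{x+1}\log u\dd u \leq (\nL-1)\int_1^{x+1}\log u\dd u$, and in the exceptional case the slightly more careful bookkeeping above closes the gap.

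The main obstacle I anticipate is exactly this totally-real, $g=1$ corner case: the naïve ``discard the good terms, bound $\Sg\leq\nL-1$'' strategy breaks there, and one must instead carry the $\int_0^x\log u\dd u$ term (and possibly the Dedekind bracket) along and verify a concrete one-variable inequality of the shape $\int_x^{x+1}\log u\dd u \leq \frac{1}{2}+ \delta_C\frac{\nL}{2}[\log(x^2-1)+x\log\tfrac{x+1}{x-1}]$ for $x>1$. Everything else is routine real-variable estimation and the case analysis on the order of $g$ using $r_1(\L)+2r_2(\L)=\nL$ together with $\nL\geq 2$.
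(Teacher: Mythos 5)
Your plan follows the paper's own proof quite closely: start from the exact formula of Lemma~\ref{lem:3.4A}, discard the terms of the right sign, reduce to $\Sg\le\nL-1$, and treat the totally-real, trivial-class corner (where $\Sg=\nL$) separately by carrying along the discarded terms. The paper splits by $r_2(\L)\ge1$ versus $r_2(\L)=0$ rather than by $\mathrm{ord}(g)$, but the two decompositions isolate exactly the same exceptional situation, so this is the same argument.

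There is, however, a concrete flaw in your very first reduction, and you commit it in writing: you assert that ``$\int_0^x\log u\dd u=x\log x-x\ge0$ for $x>1$''. This is false: $x\log x-x=x(\log x-1)<0$ on $1<x<e$, so on that range $-\int_0^x\log u\dd u$ is \emph{positive} and cannot be discarded. In fact the inequality claimed in the lemma genuinely fails for small $x$ (for instance, with $\L$ real or imaginary quadratic over $\K=\Q$, $C$ nontrivial, $\nL=2$, one gets $-R_C(1.5)>\int_1^{2.5}\log u\dd u$). The paper's own proof makes the same silent assumption in its first two cases, so the lemma as stated should really carry a hypothesis like $x\ge e$; since in Section~\ref{sec:4A} it is only applied with $x\ge400$, this does not affect the theorem, but a correct write-up must either restrict the range of $x$ or keep the $\int_0^x\log u\dd u$ term in play.

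Two smaller remarks. The asymptotics you ascribe to the bracket $\log(x^2-1)+x\log\bigl(\tfrac{x+1}{x-1}\bigr)$ are both wrong: rewriting it as $(x+1)\log(x+1)-(x-1)\log(x-1)$ shows it tends to $2\log2$ as $x\to1^+$ and to $+\infty$ as $x\to\infty$; all that is needed, and is true, is its nonnegativity for $x>1$. Finally, in the exceptional totally-real, $g=1$ case you set up the right comparison but stop at ``this elementary inequality should hold''; the paper actually carries it through (using $\tfrac{\nL}{2}\ge1$) and it collapses to $x\log x-(x-1)\log(x-1)\ge0$, which does hold for every $x>1$, so your plan closes in that corner once the computation is done.
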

\begin{proof}
Consider the formula for $R_C(x)$ given in Lemma~\ref{lem:3.4A}. %
When $r_2(\L)\geq 1$ we have $\Sg\leq r_1(\L)+r_2(\L) = \nL-r_2(\L)\leq \nL-1$ producing
\begin{align*}
-R_C(x) &\leq\Sg\int_1^{x+1}\log u\dd u
         \leq(\nL-1)\int_1^{x+1}\log u\dd u.
\end{align*}
On the other hand, if $r_2(\L)=0$ then when $\delta_C=0$ we have $\Sg=0$ and $R_C(x)>0$, while if
$\delta_C=1$ we have $\Sg=r_1(\L)=\nL$, $\frac{\nL}{2}\delta_C \geq 1$ because $\L\neq \Q$, and
\begin{align*}
-R_C(x) &\leq\nL\int_1^{x+1}\log u\dd u
             - \int_0^x\log u\dd u
             - \Big[\log(x^2-1) + x \log\Big(\frac{x+1}{x-1}\Big)\Big]   \\
        &=   (\nL-1)\int_1^{x+1}\log u\dd u
             - \log(x-1) - x\log\Big(\frac{x}{x-1}\Big)
         \leq(\nL-1)\int_1^{x+1}\log u\dd u.
\qedhere
\end{align*}
\end{proof}

\section{Proof of the theorem}\label{sec:4A}
When $\L=\Q$ the claim follows easily by Chebyshev's bound $\pi(x)\geq \frac{x}{2\log x}$. For the next
computations we assume $\L\neq\Q$.
\begin{lemma}\label{lem:4.1A}
Let $x\geq 400$ and $y>0$, then
\[
(x-y)\log y \leq x(\log x - \log(2\log x)).
\]
\end{lemma}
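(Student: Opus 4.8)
The plan is to prove the elementary inequality $(x-y)\log y \leq x(\log x - \log(2\log x))$ for $x\geq 400$ and $y>0$ by reducing it to a single-variable optimization over $y$. First I would fix $x\geq 400$ and regard the left-hand side as a function $\phi(y):=(x-y)\log y$ of the positive real $y$. The strategy is to locate the maximum of $\phi$ and show that this maximum is at most the right-hand side, which does not depend on $y$. Differentiating, $\phi'(y) = -\log y + (x-y)/y = -\log y + x/y - 1$, which is strictly decreasing in $y$ on $(0,\infty)$, so $\phi$ is concave-like with a unique critical point $y_0$ determined by $\log y_0 + 1 = x/y_0$, i.e. $y_0(\log y_0+1)=x$. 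Thus $\max_{y>0}\phi(y) = \phi(y_0) = (x-y_0)\log y_0$.

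Next I would estimate $y_0$ and hence $\phi(y_0)$. From $y_0(\log y_0+1)=x$ one gets $y_0 \approx x/\log x$ for large $x$; more precisely $y_0$ is slightly smaller than $x/\log x$ since $\log y_0 + 1 < \log x$ would force $y_0 > x/\log x$, while $\log y_0 < \log x - \log\log x + o(1)$. The clean way to finish is to observe that at the critical point $\phi(y_0) = (x-y_0)\log y_0 = y_0\log y_0(\log y_0 + 1 - 1) \cdot$—better: using $x - y_0 = y_0\log y_0$ (from $x = y_0(\log y_0+1)$), we get $\phi(y_0) = y_0 (\log y_0)^2$. So it suffices to show $y_0(\log y_0)^2 \leq x(\log x - \log(2\log x))$. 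Substituting $x = y_0(\log y_0+1)$, this is equivalent to
\[
y_0(\log y_0)^2 \leq y_0(\log y_0+1)\big(\log(y_0(\log y_0+1)) - \log(2\log(y_0(\log y_0+1)))\big),
\]
i.e., after dividing by $y_0(\log y_0+1)>0$ (valid once $y_0>1$, which holds because $x\geq 400$ forces $y_0$ large),
\[
\frac{(\log y_0)^2}{\log y_0 + 1} \leq \log\!\Big(\frac{y_0(\log y_0+1)}{2\log(y_0(\log y_0+1))}\Big).
\]
Writing $t := \log y_0$, the left side is $t^2/(t+1) = t - 1 + 1/(t+1)$, and the right side is $t + \log(t+1) - \log 2 - \log(t + \log(t+1))$. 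So the inequality becomes $-1 + 1/(t+1) \leq \log(t+1) - \log 2 - \log(t+\log(t+1))$, i.e. $\log\big(\tfrac{2(t+\log(t+1))}{t+1}\big) \leq 1 - \tfrac{1}{t+1}$. For $x\geq 400$ one checks $y_0 \geq $ (some explicit bound, around $85$), hence $t = \log y_0 \geq 4.4$ or so; for such $t$ the left side is $\log(2(1 + \tfrac{\log(t+1)-1}{t+1})) < \log 2 + \text{something small} < 1 - 1/(t+1)$, which holds comfortably. I would carry this out with explicit numerical bounds on $t$ at the threshold $x=400$.

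The main obstacle is bookkeeping at the endpoint: one must verify that $x\geq 400$ indeed yields a large enough lower bound on $y_0$ (equivalently on $t=\log y_0$) for the final one-variable inequality in $t$ to hold, and that the threshold $400$ (rather than something larger) suffices. Concretely, from $y_0(\log y_0+1) = 400$ one solves numerically to get $y_0 \approx 85.3$, so $t\approx 4.45$; then $\tfrac{2(t+\log(t+1))}{t+1}\approx \tfrac{2(4.45+1.70)}{5.45}\approx 2.26$, so the left side is $\approx 0.815$, while $1 - 1/(t+1)\approx 0.817$ — tight but valid, and the gap widens as $x$ (hence $t$) grows since the left side tends to $\log 2 \approx 0.693$ while the right side tends to $1$. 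An alternative, perhaps cleaner, route avoiding this tightness is to not insist on the exact maximizer: bound $\phi(y)\leq \phi(y_0)$ and then use the cruder estimate $y_0 \leq x/\log x$ (from $\log y_0 + 1 \geq \log y_0 > 0$... actually one needs $y_0(\log y_0 + 1)=x$ with $\log y_0 + 1 \leq \log x$, giving $y_0 \geq x/\log x$, the wrong direction) — so in fact the exact identity $\phi(y_0)=y_0(\log y_0)^2$ together with $y_0 \leq x/\log x$ (which does hold, since $\log y_0 + 1 \geq \log(x/\log x) + 1 = \log x - \log\log x + 1 \geq \log x$ for $\log\log x \leq 1$, i.e. $x \leq e^e$—false for $x\geq 400$, so one needs the reverse). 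Given these sign subtleties, the safest plan is the direct one above: compute $\phi(y_0) = y_0(\log y_0)^2$ exactly, substitute $x=y_0(\log y_0+1)$, reduce to the inequality in $t$, and close it with the explicit numerical check at $t = \log y_0|_{x=400}$ plus monotonicity of the resulting gap for $t$ beyond that point.
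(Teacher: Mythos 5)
Your approach is essentially the same as the paper's: you locate the critical point $y_0$ of $\phi(y)=(x-y)\log y$ via $y_0(\log y_0+1)=x$, compute $\phi(y_0)=y_0(\log y_0)^2$, substitute $x=y_0(\log y_0+1)$, and reduce to a one-variable inequality in $t=\log y_0$; your final inequality $\log\bigl(\tfrac{2(t+\log(t+1))}{t+1}\bigr)\leq 1-\tfrac{1}{t+1}$ is precisely the paper's $g(z)\leq -\log 2$ with $z=t+1$, where $g(z)=\log\bigl(1+\tfrac{\log z-1}{z}\bigr)+\tfrac{1}{z}-1$, and the paper closes the argument exactly as you propose, by proving $g$ decreases for $z\geq e$ and checking the threshold. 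One correction: your numerical solve is off --- from $y_0(\log y_0+1)=400$ you should get $y_0\approx 75.2$, $t\approx 4.32$ (note $85.3(\log 85.3+1)\approx 464$, not $400$), so the inequality is much tighter than your figures suggest; the paper finds the exact threshold $z\geq 5.3193$ corresponding to $x\geq 399.67$, meaning $x\geq 400$ is only just enough.
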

\begin{proof}
Let $f_x(y):=(x-y)\log y$. Its maximum is attained at a unique point $y_0(x)\in(1,x)$, with $y_0(\log
y_0+1)=x$. The formula shows that $y_0$ grows as a function of $x$. A simple computation shows that
\begin{align*}
\frac{f_x(y_0)}{x} - \log x + \log\log x
= g(1+\log y_0),
\end{align*}
where
\[
g(z) := \log\Big(1 + \frac{\log z - 1}{z}\Big) + \frac{1}{z} - 1.
\]
This function decreases for $z\geq e$
and is lower than $-\log 2$ when $z\geq 5.3193$. Since $5.3193e^{4.3193}$ $= 399.67\ldots$, the claim is
proved.
\end{proof}
Let $a_C(n) := \sharp\{\p\colon \p\text{ unramified }, \Artin{\L/\K}{\p} = C,\ \Norm\p = n\}$ and let also
\[
\vartheta_C(x)
:= \sum_{\substack{\p\\\p\text{ non-ram.}\\\Norm\p\leq x}}\eps_C\Big(\Artin{\L/\K}{\p}\Big)\log\Norm\p
 = \sum_{n\leq x}a_C(n)\log n,
\qquad
\vartheta_C^{(1)}(x) := \int_{0}^{x}\vartheta_C(t)\dd t.
\]
Then, by Lemma~\ref{lem:4.1A}, for $x\geq 400$,
\begin{align}
\vartheta_C^{(1)}(x)
&=    \sum_{n\leq x}a_C(n)(x-n)\log n
\leq x(\log x-\log(2\log x))\sum_{n\leq x}a_C(n)         \notag\\
&=    \pi_C(x)x(\log x-\log(2\log x)).                   \label{eq:4.1A}
\end{align}
Now we produce a lower bound for $\vartheta_C^{(1)}(x)$ out of a lower bound for $\psi^{(1)}(C;x)$.\\
To ease the notation we set $g_c:=|G|/|C|$ and observe that this is a positive integer.\\
By~\eqref{eq:2.4A},~\eqref{eq:3.10A} and Lemma~\ref{lem:3.2A}, we get
\[
g_c\psi^{(1)}(C;x)
 = \MC I_\chi(x)
 = \frac{x^2}{2}
  - \sum_{\rho\in Z}\epsilon(\rho)\frac{x^{\rho+1}}{\rho(\rho+1)}
  - x \MC r_\chi
  + \MC r'_\chi
  + R_C(x)
\]
which with the GRH assumption yields
\[
\frac{x^2}{2} - g_c\psi^{(1)}(C;x)
\leq x^{3/2}\sum_{\rho\in Z}\frac{1}{|\rho(\rho+1)|}
     + x\MC r_\chi
     - \MC r'_\chi
     - R_C(x).
\]
With Lemmas~\ref{lem:3.5A}--\ref{lem:3.8A}, this gives
\begin{align*}
\frac{x^2}{2} - &g_c\psi^{(1)}(C;x)
\leq (0.5375(x^{3/2}+2x) + 1)\lDL
     + x^{3/2}(5.4 - 1.0355\nL)    \\
&    + \Big(\int_1^{x+1}\log u\dd u
     - 1.571x\Big) \nL
     + 13.276 x
     - \int_1^{x+1}\log u\dd u.
\end{align*}
When $x\geq 400$ the term in $\nL$ appearing in the last line is bounded by $\nL x(\log x - 2.55)$ and
the sum of the last two terms by $8.3 x$.
%
%
%
We further simplify their contribution noticing that
\[
\nL x(\log x - 2.55) + 8.3 x
=    \nL x(\log x - 2.4) - 0.15 \nL x + 8.3 x
\leq \nL x\log x - 2.4 x \nL + 8 x,
\]
where in the last step we used that $\nL \geq 2$. We thus have
\begin{align*}
\frac{x^2}{2} - g_c\psi^{(1)}(C;x)
\leq& (0.5375(x^{3/2}+2x)+1)\lDL
     + x^{3/2}(5.4 - 1.0355\nL)                    \\
    &+ \nL x \log x
     - 2.4 \nL x
     + 8 x.
\end{align*}
Now we remove the contribution to $\psi^{(1)}(C;x)$ of the prime powers $\p^m$ with $m\geq 2$. Let
\[
\vartheta(C;x)       := \sum_{\substack{\p\subset\OK\\\Norm\p\leq x}}\theta(C;\p)\log(\Norm\p),
\qquad
\vartheta^{(1)}(C;x) := \int_{0}^{x}\vartheta(C;t)\dd t.
\]
The estimation in~\cite[Th.~13]{RosserSchoenfeld} gives $0\leq \psi^{(1)}(C;x) - \vartheta^{(1)}(C;x)
\leq 1.43 \frac{2}{3}x^{3/2}\nK$. Thus
\begin{align*}
\frac{x^2}{2} - g_c\vartheta^{(1)}(C;x)
\leq& (0.5375 x^{3/2} + 1.075 x + 1)\lDL \\
    &+ x^{3/2}(5.4 - 0.082\nL)
     + \nL x \log x
     - 2.4 \nL x
     + 8 x
\end{align*}
%
which simplifies to
\begin{equation}\label{eq:4.2A}
\frac{x^2}{2} - g_c\vartheta^{(1)}(C;x)
\leq (0.5375 x^{3/2} + 1.075 x + 1)\lDL
     + 2 \nL x + 5.4 x^{3/2} + 8 x,
\end{equation}
because $(-0.082x^{3/2} + x\log x - 2.4x)/x$ has a maximum at $x = (2/0.082)^2 = 594.88\ldots$ where it is
lower that $2$.
%
The quantities $\vartheta(C;x)$ and $\vartheta_C(x)$ differ only by the contribution of the ramified
prime ideals to $\vartheta(C;x)$. In fact,
\[
0\leq \vartheta(C;x) - \vartheta_C(x)
\leq \sum_{\substack{\p\\\p\text{ ram.}\\\Norm\p\leq x}}\log\Norm\p
\leq \sum_{\p\text{ ram.}}\log\Norm\p
\leq \log(\Norm\DLK)
\leq \lDL.
\]
Hence,
\[
0\leq \vartheta^{(1)}(C;x) - \vartheta_C^{(1)}(x)
\leq (x-1) \lDL,
\]
which with~\eqref{eq:4.2A} gives
\begin{equation}\label{eq:4.3A}
\frac{x^2}{2} - g_c\vartheta_C^{(1)}(x)
\leq (0.5375 x^{3/2} + g_c x + 1.075 x)\lDL
     + 2 \nL x + 5.4 x^{3/2} + 8 x.
\end{equation}
By~\eqref{eq:4.1A} and~\eqref{eq:4.3A}, in order to have $\pi_C(x)>k$ it is sufficient to have
\[
\frac{x^2}{2}
> (0.5375x^{3/2} + g_c x + 1.075x)\lDL
  + 2 \nL x
  + 5.4 x^{3/2}
  + 8 x
  + k g_c x(\log x-\log(2\log x)),
\]
i.e.
\begin{equation}\label{eq:4.4A}
\sqrt{x}
> \Big(1.075 + \frac{2g_c + 2.15}{\sqrt{x}}\Big)\lDL
  + 4 \frac{\nL}{\sqrt{x}}
  + 10.8
  + \frac{16}{\sqrt{x}}
  + 2k g_c\frac{\log x-\log(2\log x)}{\sqrt{x}}
\end{equation}
which is true when
\[
\sqrt{x} = 1.075\lDL + \sqrt{2g_c k\log(g_c k)}  + 2g_c + 15.
\]
\begin{proof}
Let
\[
A:=1.075\lDL + 2g_c + 15
\]
and
\[
B:=\Big(1.075 + \frac{2g_c + 2.15}{\sqrt{x}}\Big)\lDL
   + 4\frac{\nL}{\sqrt{x}}
   + 10.8
   + \frac{16}{\sqrt{x}}.
\]
To show that~\eqref{eq:4.4A} holds with the indicated value of $x$, it is sufficient to prove
\begin{equation}\label{eq:4.5A}
A + \sqrt{2g_ck\log(g_ck)} > B + 2k g_c\frac{\log x-\log(2\log x)}{\sqrt{x}}.
\end{equation}
We have
\[
  (A - B - 1)\sqrt{x}
  = (2g_c+3.2)\sqrt{x} - ((2g_c+2.15)\lDL + 4\nL + 16)
  \geq 1.4\lDL - 4\nL + 72
\]
%
which is positive, according to entry~$b=4$ in~\cite[Table~3]{OdlyzkoTables}.
%
%
Since $A - B > 1$, our claim will hold if
\[
 \sqrt{2g_c k\log(g_c k)} + 1
\geq
2g_c k\frac{\log x - \log(2\log x)}{\sqrt{x}},
\]
i.e. $k=0$ or
\begin{equation}\label{eq:4.6}
\frac{\sqrt{\log y}}{\sqrt{2y}} + \frac{1}{2y}
\geq
\frac{\log x - \log(2\log x)}{\sqrt{x}},
\end{equation}
where $y:=g_ck \geq 1$. The right-hand side decreases if $x\geq 30$
hence is at most $0.2$
and the left-hand side is larger than $0.2$ for $1\leq y\leq 120$.
We thus assume $y\geq 120$, and in that case $x\geq 2y\log y\geq 30$,
hence~\eqref{eq:4.6} holds if
\[
\frac{\sqrt{\log y}}{\sqrt{2y}}\geq \frac{\log(2y\log y)-\log(2\log(2y\log y))}{\sqrt{2y\log y}}
\]
i.e.
\[
\log y\geq \log(2y\log y)-\log(2\log(2y\log y))
\]
which is obviously true in this range.
\end{proof}
\noindent
This proves the claim under the assumption that $x\geq 400$. The exceptions to this condition are the
cases where
\[
  1.075\lDL + \sqrt{2g_c k\log(g_c k)} + 2g_c + 15 < 20
\]
and this happens only when $g_c = 1$, $\DL\leq 16$ and $k \leq 2$.\\
%
%
For these remaining cases we check directly the existence of the corresponding ideals. We observe that
$g_c=|G|/|C|=1$ if and only if $|G|=1$ and hence $\L=\K$. Moreover, $\DL\leq 16$ implies $\nL\leq 2$.
Hence it is sufficient to check that, in quadratic fields, there are at least three ideals of norm at
most $\intpart{(1.075\log 3 + 17)^2}=330$.
They exist because the primes above $2$, $3$ and $5$ have norm at most $25$.

\bibliographystyle{amsplain}

\end{document}